\documentclass[a4paper,12pt]{article}

\usepackage[truedimen,margin=25mm]{geometry}

\usepackage{amsmath}
\usepackage{amssymb}
\usepackage{amsthm}

\usepackage{xcolor}
\usepackage{enumerate}
\usepackage{ulem}
\usepackage[abbrev]{amsrefs}

\newcommand{\R}{\mathbb{R}}

\newcommand{\N}{\mathbb{N}}
\renewcommand{\H}{\mathbb{H}}

\numberwithin{equation}{section}

\newtheorem{theorem}{Theorem}[section]
\newtheorem{lemma}[theorem]{Lemma}
\newtheorem{proposition}[theorem]{Proposition}
\newtheorem{corollary}[theorem]{Corollary}

\theoremstyle{definition}

\newtheorem{remark}[theorem]{Remark}

\title{On the existence of vector solutions to nonlinear Schr\"odinger equations with weak three-wave interaction}
\author{
Tomoharu Kinoshita\thanks{Corresponding author, Department of Mathematics, School of Science and Engineering,Waseda University, 
3-4-1 Ohkubo, Shinjuku-ku, 169-8555, Tokyo, Japan.
e-mail:luzhiqing@akane.waseda.jp}
~and~
Yohei Sato\thanks{Corresponding author, Department of Mathematics, Saitama University,
Shimo-Okubo 255, Sakura-ku Saitama-shi, 338-8570, JAPAN
e-mail: ysato@rimath.saitama-u.ac.jp}
}
\begin{document}

\date{}

\maketitle

\begin{abstract}
We study a nonlinear Schr\"odinger system with three-wave interaction:
\begin{equation*}
	\left\{\begin{aligned}
	& -  \Delta u_1 = f_1(u_1) + \alpha u_2u_3 \quad \text{ in } \R^N, \\
	& -  \Delta u_2 = f_2(u_2) + \alpha u_3u_1 \quad \text{ in } \R^N, \\
	& -  \Delta u_3 = f_3(u_3) + \alpha u_1u_2 \quad \text{ in } \R^N, \\
	& \quad \vec{u}=(u_1,u_2,u_3)\in (H_{\rm rad}^1(\R^N))^3,
	\end{aligned}\right.
\end{equation*}
where $3\leq N\leq 5$, $\alpha\in \R$ and each nonlinearity $f_i(\xi)$ satisfies the Berestycki-Lions conditions.
Let $S_i$ denote the set of all least energy solutions of the scalar equation $-\Delta u = f_i(u)$ in $H_{\rm rad}^1(\R^N)$.
A solution of the systems is called vector if all its components are nontrivial.  
We establish the existence of two distinct families of vector solutions $\{\vec{u}_\alpha\}$ 
with different asymptotic behaviors as $\alpha \to 0$.
One family satisfies ${\rm dist}(\vec{u}_{\alpha},S_1\times S_2\times S_3) \to 0$,
while another satisfies ${\rm dist}(\vec{u}_{\alpha},S_1\times S_2\times \{0\}) \to 0$.
By contrast, we prove that no family of vector solutions satisfies ${\rm dist}(\vec{u}_{\alpha},S_1\times \{0\}\times \{0\}) \to 0$.
Together, these results give a complete description of the asymptotic structure of vector solutions when the three-wave interaction is weak.

\medskip

\noindent
MSC2010: 
35J57, 
35J50, 
35J91  

\noindent
Keywords: coupled nonlinear Schr\"odinger equations; Schr\"odinger systems; three wave interaction; 
Berestycki-Lions nonlinearity;

\end{abstract}

\section{Introduction}
We study the following nonlinear Schr\"odinger system with three-wave interaction:
\begin{equation}\label{eq:1.1}
	\left\{\begin{aligned}
	& -  \Delta u_1 = f_1(u_1) + \alpha u_2u_3 \quad \text{ in } \R^N, \\
	& -  \Delta u_2 = f_2(u_2) + \alpha u_3u_1 \quad \text{ in } \R^N, \\
	& -  \Delta u_3 = f_3(u_3) + \alpha u_1u_2 \quad \text{ in } \R^N, \\
	& \quad \vec{u}=(u_1,u_2,u_3)\in (H_{\rm rad}^1(\R^N))^3.
	\end{aligned}\right.
\end{equation}
Such systems arise naturally in various physical contexts, including nonlinear optics and plasma physics, 
and provide a natural framework for mathematical study.
They have recently been studied extensively, 
see, for example, \cites{A,CC1,CC2,CCO1,CCO2,CO1,KiO1,KO1,KO2,O1,OS,P1}.
Most of these studies focus on the existence and qualitative properties of ground states, 
as well as their stability, under simple nonlinearities such as $f_i(u) = |u|^{p-1}u$.  
When $\alpha>0$ is small, ground states are scalar solutions,  
while for large $\alpha$, they become vector solutions. 
Here, a solution of the system is called {\bf scalar} if exactly one component is nontrivial 
and {\bf vector} if all components are nontrivial.  
Although these systems have been widely studied, to the best of the authors' knowledge,  
existing results for $\alpha$ close to $0$ mainly concern scalar solutions, 
while the construction of vector solutions has not been thoroughly investigated.

The aim of this paper is to establish the existence of vector solutions of \eqref{eq:1.1} for $\alpha$ close to $0$, 
to investigate their asymptotic behavior as $\alpha \to 0$,  
and to treat the nonlinearities $f_i$ under assumptions as general as possible.
In the weak-interaction regime, any vector solution that exists must stay close to certain limits formed by least-energy solutions of the uncoupled equations.
In particular, we focus on vector solutions that remain close to least-energy solutions of the uncoupled equations. 
We completely classify all such solutions and show that their existence or nonexistence depends on 
which components of the limiting profile remain nontrivial.

\medskip

Throughout this paper, we assume that $3\leq N \leq 5$, $\alpha\in \R$.
We denote by $H_{\rm rad}^1(\R^N)$ the subspace of radially symmetric functions in $H^1(\R^N)$ and set $\H=(H_{\rm rad}^1(\R^N))^3$.
Each nonlinearity $f_i$ is a continuous function and satisfies the following conditions:
\begin{itemize}

\item[(f1)] ${\displaystyle \varlimsup_{|\xi| \to \infty}}\frac{f(\xi)}{|\xi|^{2^*-1}} = 0$ where $2^*=\frac{2N}{N-2}$.

\item[(f2)] $-\infty< {\displaystyle \varliminf_{|\xi|\to 0}}\frac{f(\xi)}{\xi}\leq{\displaystyle \varlimsup_{|\xi|\to 0}}\frac{f(\xi)}{\xi}<0$.

\end{itemize}
In addition, we assume that at least one of the $f_i$ satisfies:
\begin{itemize}

\item[(f3)] There exists $\zeta_0>0$ such that $F(\zeta_0)>0$, where $F(\xi)=\int_0^\xi f(\tau)\, d\tau$.

\end{itemize}
As discussed in \cite{BL1}, the assumptions (f1)--(f3) are almost necessary and sufficient for the existence 
of a positive radially symmetric solution of the scalar field equation
\begin{equation*}
	-\Delta u = f(u) \quad \text{ in } \R^N, \qquad u\in H_{\rm rad}^1(\R^N).
\end{equation*}
The functional associated with \eqref{eq:1.1} is defined by 
\begin{equation*}
	I_\alpha(\vec{u}) = \sum_{i=1}^3J_i(u_i) - \alpha \int_{\R^N}u_1u_2u_3\, dx, 
\end{equation*}
where
\begin{equation*}
	J_i(u_i) = \frac{\,1\,}{2}\int_{\R^N} |\nabla u_i|^2\, dx - \int_{\R^N}F_i(u_i)\,dx.
\end{equation*}
When $f_i$ satisfies (f1)--(f3),  the functional $J_i$ has a positive radially symmetric least energy critical point (see \cite{BL1}).
Let $S_i$ denote the set of all radially symmetric least energy critical point of $J_i$.
Then $S_i$ is compact in $H_{\rm rad}^1(\R^N)$ (see Lemma \ref{lem:2.4}).
We use the following notation for norms:
\begin{equation*}
	\|u\|_{L^p}^p = \int_{\R^N} |u|^p \, dx, \qquad
	\|u\|_{H^1}^2 = \|\nabla u\|_{L^2}^2 + \|u\|_{L^2}^2, \qquad
	\|\vec{u}\|_{\H}^2 = \sum_{i=1}^3 \|u_i\|_{H^1}^2.
\end{equation*}
For a subset $A\subset \H$ and a point $\vec{u}\in \H$, we define the distance by
\begin{equation*}
	{\rm dist}(\vec{u},A) = \inf_{\vec{v}\in A}\|\vec{u}-\vec{v}\|_{\H}.
\end{equation*}

We now present the main results of this paper.
We establish the existence of two distinct families of vector solutions $\{\vec{u}_\alpha\}$ 
with different asymptotic behaviors as $\alpha \to 0$.
The first one is given as follows.

\begin{theorem}\label{thm:1.1}
Suppose that $f_1$, $f_2$, $f_3$ satisfy (f1)--(f3).
Then there exists $\alpha_0>0$ such that, for all $\alpha$ with $|\alpha|\leq \alpha_0$, 
\eqref{eq:1.1} has a vector solution $\vec{u}_\alpha$ satisfying
\begin{equation}\label{eq:1.2}
	\lim_{\alpha \to 0}{\rm dist}(\vec{u}_{\alpha},X) = 0 \qquad (X=S_1\times S_2\times S_3).
\end{equation}
\end{theorem}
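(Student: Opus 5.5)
We construct $\vec{u}_\alpha$ by a localized variational argument around $X=S_1\times S_2\times S_3$, in the spirit of the Byeon--Jeanjean approach for Berestycki--Lions nonlinearities. We first modify each $f_i$ outside a large interval so that $J_i$ becomes a $C^1$ functional on $H_{\rm rad}^1(\R^N)$; elements of $S_i$ are bounded in $L^\infty$ by elliptic estimates, so this changes nothing near $S_i$. We also use that the cubic term is controlled on $\H$: for $3\le N\le 5$ we have $3\le 2^*$, so
\[
	\Bigl|\int_{\R^N}u_1u_2u_3\,dx\Bigr|\le C\|\vec u\|_{\H}^3 .
\]
This term is $C^1$ with derivative bounded by $C\|\vec u\|_{\H}^2$, and it is weakly continuous on radial functions by Strauss compactness, since $2<3<2^*$. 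This is exactly where the restriction $N\le 5$ enters.

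Next we build a min--max level. Set $m_i=J_i(S_i)$ (the least energy level) and $m=m_1+m_2+m_3$. For each $i$ we use the Jeanjean--Tanaka scaling path $\gamma_i(t)=\omega_i(\cdot/t)$ with $\omega_i\in S_i$, $t\in(0,L]$. Along this path $J_i$ attains its maximum $m_i$ only at $t=1$, and $J_i(\gamma_i(L))<0$. The product path $\vec\gamma(t_1,t_2,t_3)=(\gamma_1(t_1),\gamma_2(t_2),\gamma_3(t_3))$ on a cube $Q$ then gives the value $I_0$ strictly below $m$ on $\partial Q$, and hence $I_\alpha<m-\delta_0$ there for $|\alpha|$ small. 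Define the class $\Gamma$ of continuous maps $Q\to\H$ agreeing with $\vec\gamma$ on $\partial Q$, and set
\[
	c_\alpha=\inf_{\eta\in\Gamma}\max_Q I_\alpha\circ\eta .
\]
We will show $c_\alpha\to m$. The upper bound comes from $\vec\gamma$ itself. For the lower bound we use a degree or intersection argument coordinatewise: every $\eta\in\Gamma$ must meet a set on which each component lies on a Pohozaev-type manifold $P_i(u_i)=0$ (with $P_i(u)=\frac{N-2}{2}\|\nabla u\|_2^2-N\!\int F_i(u)$) or a suitable modification of it. On that set $J_i\ge m_i$, so $\max_Q I_\alpha\circ\eta\ge m-o(1)$. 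We have to be careful near the sign changes of $P_i$, and will adapt the Jeanjean--Tanaka characterization $m_i=\inf_{P_i=0}J_i$.

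The core of the argument is a localized gradient estimate. We claim that for small $d>0$ there exist $\nu>0$ and $\alpha_0>0$ such that $\|I_\alpha'(\vec u)\|\ge\nu$ whenever $|\alpha|\le\alpha_0$, $I_\alpha(\vec u)\le c_\alpha$, and $\vec u\in X^{d}\setminus X^{d/2}$ (with $X^d$ the $d$-neighbourhood of $X$). To prove it we argue by contradiction: sequences $\alpha_n\to0$ and $\vec u_n$ with $I'_{\alpha_n}(\vec u_n)\to0$ are bounded in $\H$. By the radial compactness of the $F_i$-terms and of the cubic term, together with the compactness of $S_i$ (Lemma \ref{lem:2.4}), each component converges strongly to a critical point of $J_i$ with energy at most $m_i+o(1)$. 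Such a limit is either $0$ or lies in $S_i$. Zero components are excluded because $\vec u_n\in X^d$ with $d<\min_i\inf_{S_i}\|\cdot\|_{H^1}$. The limit therefore lies in $X$, which contradicts $\vec u_n\notin X^{d/2}$.

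With this estimate we run a deformation argument inside $X^d$. We also need that $\max I_\alpha\circ\eta$ close to $c_\alpha$ forces $\eta(s)\in X^{d/2}$, and this needs some care. We handle it by showing that any point of $\vec\gamma$ with $I_0\ge m-\epsilon$ lies in $X^{d/2}$, which holds because $J_i(\gamma_i(t_i))$ is close to $m_i$ only for $t_i$ near $1$. The deformation then produces a Palais--Smale sequence of $I_\alpha$ in $X^{d}$ at level $c_\alpha$. Compactness, again from the radial compact embedding plus (f1)--(f2), gives a critical point $\vec u_\alpha\in X^d$. Since $d$ is smaller than the norms of elements of $S_i$, every component of $\vec u_\alpha$ is nontrivial, so $\vec u_\alpha$ is a vector solution. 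Finally, the limit $\mathrm{dist}(\vec u_\alpha,X)\to0$ follows by repeating the compactness argument of the gradient estimate with $d$ replaced by $d_n\to0$.

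The main obstacle is the lower bound $c_\alpha\ge m-o(1)$ together with the deformation staying inside $X^d$. Since $F_i$ need not be monotone or homogeneous, there is no Nehari manifold to work with. We will first try to use the Pohozaev functionals $P_i$ combined with a three-dimensional Miranda or Brouwer degree argument on $Q$.
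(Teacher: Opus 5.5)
There is a genuine gap, and it sits exactly at the step you call the main obstacle. Your class $\Gamma$ prescribes only boundary values, so nothing controls the cubic term along an arbitrary $\eta\in\Gamma$. At a point where $P_i(\eta_i)=0$ for $i=1,2,3$ you do get $J_i(\eta_i)\ge m_i$, but only if $\eta_i\neq0$. The term $\alpha\int_{\R^N}\eta_1\eta_2\eta_3\,dx$ is not small uniformly in $\eta$, and the Pohozaev constraints do not bound it. On $\{P_i=0\}$ one has $J_i(u)=\frac{2}{N-2}\int_{\R^N}F_i(u)\,dx$. Now take $f_i(\xi)=-\xi+|\xi|^{p-1}\xi$ with $1<p<2$ and $\alpha>0$, rescale one radial plateau profile of height $\zeta$ onto the Pohozaev manifold, and put it in all three slots. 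Then $\sum_iJ_i$ is of order $\zeta^{p+1}$ times the plateau volume, while $\int u_1u_2u_3$ is of order $\zeta^3$ times the same volume. Letting $\zeta$ and then the radius grow, $I_\alpha$ takes arbitrarily negative values on $\{P_1=P_2=P_3=0\}$. So $\max_Q I_\alpha\circ\eta\ge m-o(1)$ cannot come out of the intersection argument for your $\Gamma$.

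Your large cube causes a second problem. If $t_i$ runs down to $0$, where $\gamma_i=0$ and $P_i(0)=0$, then $(P_1,P_2,P_3)\circ\vec\gamma$ vanishes at boundary points such as $(0,1,1)$, and the degree is undefined. If you stop at $t_i=\epsilon>0$ instead, an unconstrained $\eta$ can still produce a zero of $(P_i(\eta_i))_i$ with a vanishing component, which gives no energy information.

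The missing idea, and what the paper does, is to build the localization into the admissible class. Use the logarithmic scaling $\sigma_i(s)=\omega_i(e^{-s}\cdot)$ on a small cube $R=[-r,r]^3$ with $\vec\sigma(R)\subset X_\mu$. Set $\Gamma=\{\vec\gamma\in C(R,\H):\ \vec\gamma=\vec\sigma \text{ on } \partial R,\ \vec\gamma(R)\subset X_{2\mu}\}$, where $\mu<\frac13\inf\{\|\omega_i\|_{H^1}:\omega_i\in S_i,\ i=1,2,3\}$. This gives three things:
every component of every $\vec\gamma\in\Gamma$ is nonzero;
$|\int\gamma_1\gamma_2\gamma_3\,dx|\le D$ uniformly;
$\Phi=\Psi\neq0$ on $\partial R$ with $\deg(\Psi,(-r,r)^3,0)=-1$.
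The Jeanjean--Tanaka characterization then yields $c_\alpha\ge m-|\alpha|D$, where $m=c_1+c_2+c_3$. No global mountain-pass geometry is needed: Lemma 3.3(iii) alone gives $I_\alpha(\vec\sigma(\vec s))\le m-\delta_2$ on $\partial R$.

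The price is that this $\Gamma$ is not invariant under the deformation, so $c_\alpha$ need not be critical, and you must show the deformed map stays in $X_{2\mu}$. This is where the annulus estimate enters. It has to hold up to level $m+\delta_1$, not just $c_\alpha$, because flow lines start at levels up to $d_\alpha=\max_R I_\alpha\circ\vec\sigma$. A normalized pseudo-gradient flow line from $X_\mu$ that reached $\partial X_{2\mu}$ would spend time at least $\mu$ in the annulus, where $\|I_\alpha'\|_{\H^*}\ge\rho$. It would therefore lose at least $\rho\mu\ge2\delta_0$ and hit the stopping level $m-\delta_0$ first. The outcome is a Palais--Smale sequence in $X_\mu\cap[I_\alpha\le d_\alpha]$, not necessarily at level $c_\alpha$, and that is all that is needed.

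Two smaller points.
First, modifying $f_i$ is unnecessary, since (f1)--(f2) already make $J_i$ of class $C^1$. If you do modify it, you need an $L^\infty$ bound on the solutions you find to return to the original equation, and $H^1$-closeness to $S_i$ does not give one.
Second, when (f2) provides only $\liminf$ and $\limsup$, $\int F_i(u)\,dx$ is not weakly continuous on $H^1_{\rm rad}$ and no linear part can be split off. Strong convergence of Palais--Smale sequences then needs the splitting $f_i+\lambda_i\xi=h_i^+-h_i^-$. Here $h_i^+=0$ near $0$, so that part is compact by Strauss decay, and $h_i^-$ is handled by Fatou's lemma (Lemma 2.1).
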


Since $X$ is compact, \eqref{eq:1.2} implies that there exist a sequence $\alpha_n \to 0$ and $\vec{u}_0\in X$ such that 
$\vec{u}_{\alpha_n}\to \vec{u}_0$ strongly in $\H$.
The second family of vector solutions $\{\vec{u}_\alpha\}$ is described as follows.

\begin{theorem}\label{thm:1.2}
Suppose that $f_1$, $f_2$ satisfy (f1)--(f3) and that $f_3$ satisfies (f1) and (f2).
Then there exists $\tilde{\alpha}_0>0$ such that, for all $\alpha$ with $|\alpha|\leq \tilde{\alpha}_0$, 
\eqref{eq:1.1} has a vector solution $\vec{u}_\alpha$ satisfying
\begin{equation}\label{eq:1.3}
	\lim_{\alpha \to 0}{\rm dist}(\vec{u}_{\alpha},Y) = 0 \qquad (Y=S_1\times S_2\times \{0\}).
\end{equation}
\end{theorem}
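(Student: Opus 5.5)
The plan is a variational perturbation argument localized near $Y$. Nontriviality of the third component comes for free from the equation once the first two components are close to $S_1\times S_2$.

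\textbf{Step 1: reduction to a critical point near $Y$.} Suppose $\vec u_\alpha$ is a critical point of $I_\alpha$ with ${\rm dist}(\vec u_\alpha,Y)\le\delta$ and $\alpha\neq0$. Then $u_1,u_2$ are nontrivial because $0\notin S_i$. I claim $u_3\not\equiv0$. If $u_3\equiv0$, the third equation gives $u_1u_2=0$ a.e. Take $(w_1,w_2)\in S_1\times S_2$ within $\delta$ of $(u_1,u_2)$; these may be taken positive, since least energy solutions can be chosen of one sign. Fix a positive $\varphi\in H^1_{\rm rad}$, e.g.\ $\varphi=w_1$. Since $3<2^*$ for $N\le5$, H\"older and Sobolev give
\begin{equation*}
\Bigl|\int u_1u_2\varphi-\int w_1w_2\varphi\Bigr|\le C\delta .
\end{equation*}
The constant $C$ is uniform because $S_1,S_2$ are compact (Lemma~\ref{lem:2.4}). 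By compactness, $\inf_{S_1\times S_2}\int w_1w_2w_1>0$, so $\int u_1u_2\varphi>0$ once $\delta$ is small, which is a contradiction. The case $\alpha=0$ is excluded or handled trivially. So everything reduces to producing, for each small $|\alpha|$, a critical point of $I_\alpha$ in $N_\delta(Y)=\{{\rm dist}(\cdot,Y)<\delta\}$, with $\delta=\delta(\alpha)\to0$ along the family.

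\textbf{Step 2: setting up the min-max.} At $\alpha=0$ the functional decouples. Near $Y$ the third component sits at a strict local minimum of $J_3$ at $0$: by (f2), $F_3(\xi)\le -c\xi^2$ near $0$, and by (f1) we have $J_3(u_3)\ge c\|u_3\|_{H^1}^2$ for small $\|u_3\|_{H^1}$. The first two components are mountain-pass points of $J_1,J_2$ with levels $m_1,m_2$. I would use the Byeon--Jeanjean--Tanaka type local argument.

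For $(w_1,w_2)\in S_1\times S_2$, the dilation paths $t\mapsto w_i(\cdot/t)$ for $i=1,2$ give a two-parameter family $\gamma_0(t_1,t_2)=(w_1(\cdot/t_1),w_2(\cdot/t_2),0)$. Along this family $I_0$ attains its maximum $m_1+m_2$ only at $t_1=t_2=1$, i.e.\ on $Y$. The cubic term is small uniformly on bounded sets: $\bigl|\alpha\int u_1u_2u_3\bigr|\le C|\alpha|$. Hence $\sup I_\alpha\circ\gamma_0\le m_1+m_2+o(1)$, and the sup is taken near $Y$.

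Define $c_\alpha$ as a min-max over deformations of $\gamma_0$ that stay in a fixed neighborhood of $Y$ at high energy levels. The lower bound $c_\alpha\ge m_1+m_2-o(1)$ follows from two facts: the two-dimensional linking of $J_1\oplus J_2$ (intersection with the Pohozaev-type sets of each $J_i$), and $J_3\ge0$ on the small ball in the third component.

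\textbf{Step 3: the gradient estimate (main obstacle).} The key estimate is the following. For small $\delta'<\delta$ there are $\nu>0$ and $\alpha_1>0$ such that
\begin{equation*}
\|I_\alpha'(\vec u)\|\ge\nu \quad\text{whenever } |\alpha|\le\alpha_1,\ \vec u\in N_\delta(Y)\setminus N_{\delta'}(Y),\ |I_\alpha(\vec u)-(m_1+m_2)|\le\nu .
\end{equation*}
I would prove it by contradiction with sequences $\alpha_n\to0$. Radial compactness (Strauss), together with (f1) and (f2), gives strong convergence of bounded Palais--Smale-type sequences to a critical point of $I_0$ in the closed annulus with energy $m_1+m_2$. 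Such a critical point has third component a critical point of $J_3$ near $0$, hence $0$ by the local coercivity. Its first two components are nontrivial critical points of $J_1,J_2$ close to $S_1,S_2$, with total energy $m_1+m_2$. Each such component has $J_i\ge m_i$, so both are least energy critical points and the limit lies in $Y$. This contradicts the fact that it lies at distance $\ge\delta'$ from $Y$.

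Some care is needed because $f_i$ is only continuous, so $I_\alpha\in C^1$ but there is no uniqueness for the $u_3$-equation. That is why I avoid an implicit-function reduction and use deformation.

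\textbf{Step 4: conclusion.} With the gradient bound, a standard deformation along a pseudo-gradient field, localized in $N_\delta(Y)$, would lower $\gamma_0$ below $c_\alpha$ if $I_\alpha$ had no critical point in $N_{\delta'}(Y)$ near level $c_\alpha$. Hence a critical point $\vec u_\alpha\in N_{\delta'}(Y)$ exists. Letting $\delta'\to0$ as $\alpha\to0$, via a diagonal choice, gives \eqref{eq:1.3}, and Step 1 shows $\vec u_\alpha$ is vector.
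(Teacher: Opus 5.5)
Your Steps 2--4 are the paper's proof: the dilation paths $\sigma_i$, a min-max over maps into a neighbourhood of $Y$ with the boundary values of $(\sigma_1,\sigma_2,0)$, and the lower bound from the degree of $(P_1(\gamma_1),P_2(\gamma_2))$ together with $J_3\ge 0$ on a small ball. The gradient bound on the annulus comes from the compactness of Proposition 2.3, followed by the pseudo-gradient deformation.

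The genuine difference is Step 1. The paper (Lemma 4.6) shows that \emph{every} solution with $u_1\neq0$, $u_2\neq0$ has $u_3\neq0$. It does this by strong unique continuation (Jerison--Kenig), applied to whichever of $u_1,u_2$ vanishes on an open set. Your argument is perturbative and holds only within a uniform distance of $Y$, which is all the theorem needs. It replaces unique continuation by H\"older's inequality and compactness of $S_1\times S_2$, which is more elementary.

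As written, though, Step 1 rests on a misstatement. The pair $(w_1,w_2)$ is dictated by proximity to $(u_1,u_2)$, so it cannot be ``taken positive''. Moreover, $S_i$ generally contains negative elements; for example $S_i=-S_i$ whenever $f_i$ is odd. What you actually need is that \emph{every} $w\in S_i$ has a strict constant sign, and this is true:
\begin{itemize}
\item Suppose $w\in S_i$ changed sign. Then $0=P_i(w)=P_i(w^+)+P_i(-w^-)$ forces, say, $P_i(w^+)\le 0$.
\item Rescaling $w^+$ onto $\{P_i=0\}$ gives energy at most $\frac1N\|\nabla w^+\|_{L^2}^2<\frac1N\|\nabla w\|_{L^2}^2=c_i$. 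This contradicts the Jeanjean--Tanaka characterization $c_i=\inf\{J_i(u): P_i(u)=0,\ u\neq0\}$.
\item By (f2) and $w\in L^\infty$, the potential $f_i(w)/w$ is bounded. The strong maximum principle then gives $|w|>0$ everywhere.
\end{itemize}
With this, $(w_1,w_2)\mapsto\bigl|\int w_1^2w_2\,dx\bigr|$ is continuous and positive on the compact set $S_1\times S_2$. Your contradiction then goes through with absolute values.

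Two smaller points. First, the diagonal choice of $\delta'$ is unnecessary. Your critical point lies in $N_\delta(Y)$ at level at most $m_1+m_2$. So the contradiction argument of your Step 3, applied to exact critical points along any $\alpha_n\to0$, already gives $\mathrm{dist}(\vec u_{\alpha_n},Y)\to0$; this is how the paper concludes via Lemma 4.1. Second, $\alpha=0$ cannot be ``handled trivially'' and must be excluded. For $f_3(\xi)=-\xi$ the decoupled system has no vector solution at all at $\alpha=0$. The paper's Lemma 4.6 also tacitly uses $\alpha\neq0$ when it deduces $u_1u_2=0$.
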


Since the solution $\vec{u}_\alpha=(u_{1\alpha},u_{2\alpha},u_{3\alpha})$ provided by Theorem \ref{thm:1.2} stays close to $S_1\times S_2 \times \{0\}$, 
$u_{1\alpha}$ and $u_{2\alpha}$ are necessarily nontrivial.
The three-wave interaction ensures that $u_{3\alpha}$ is nontrivial as well, despite vanishing as $\alpha\to 0$ (see Lemma \ref{lem:4.6}).
Theorem \ref{thm:1.3} below shows that, in contrast, vector solutions near $S_1\times \{0\}\times \{0\}$ do not exist. 
This nonexistence is also a distinctive feature arising from the three-wave interaction.

\begin{theorem}\label{thm:1.3}
Suppose that $f_1$ satisfies (f1)--(f3) and that $f_2$, $f_3$ satisfy (f1) and (f2).
Then \eqref{eq:1.1} has no vector solutions $\vec{u}_\alpha$ satisfying
\begin{equation}\label{eq:1.4}
	\lim_{\alpha \to 0}{\rm dist}(\vec{u}_{\alpha},Z) = 0 \qquad (Z=S_1\times \{0\}\times \{0\}).
\end{equation}
\end{theorem}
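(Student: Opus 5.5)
We argue by contradiction. Suppose there are $\alpha_n\to 0$ and vector solutions $\vec{u}_n=(u_{1n},u_{2n},u_{3n})$ of \eqref{eq:1.1} with $\alpha=\alpha_n$ and ${\rm dist}(\vec{u}_n,Z)\to 0$. Then $u_{2n},u_{3n}\to 0$ in $H^1$, while by compactness of $S_1$ (Lemma \ref{lem:2.4}) we may assume $u_{1n}\to w\in S_1$ in $H^1_{\rm rad}$. The idea is that the equations for $u_2,u_3$ are, to leading order, a linear system driven only by the small coupling $\alpha u_1$. Because both $u_2$ and $u_3$ are small, each forcing term $\alpha u_3u_1$, $\alpha u_1u_2$ is of order $|\alpha|$ times the size of the other small component. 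This cannot balance the coercive part $-\Delta u - f_i(u)\approx -\Delta u + m_i u$ coming from (f2).

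First we quantify the coercivity. By (f2) there are $m>0$ and $\delta>0$ with $f_i(\xi)\xi\le -m\xi^2$ for $|\xi|\le\delta$, $i=2,3$. By (f1) and continuity, $|f_i(\xi)\xi|\le C|\xi|^{2^*}$ for $|\xi|\ge\delta$. Testing the $u_2$-equation against $u_2$ gives
\begin{equation*}
\|\nabla u_{2n}\|_{L^2}^2 - \int f_2(u_{2n})u_{2n}\,dx = \alpha_n\int u_{1n}u_{2n}u_{3n}\,dx .
\end{equation*}
On the set where $|u_{2n}|\le\delta$ the integrand $-f_2(u_{2n})u_{2n}$ is at least $m u_{2n}^2$. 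On the complement, $u_{2n}^2\le\delta^{2-2^*}|u_{2n}|^{2^*}$, so that region contributes at most $C\|u_{2n}\|_{L^{2^*}}^{2^*}$. The Sobolev inequality then bounds this by $C\|u_{2n}\|_{H^1}^{2^*}$, which is $o(\|u_{2n}\|_{H^1}^2)$ since $u_{2n}\to0$. Hence the left-hand side is at least $c\|u_{2n}\|_{H^1}^2$ for $n$ large. (Alternatively one could use radial decay or $L^\infty$ bounds, but the Sobolev splitting suffices.)

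Next we bound the right-hand side. Since $N\le5$ implies $3\le 2^*$, we have $H^1\hookrightarrow L^3$. By H\"older,
\begin{equation*}
|\alpha_n|\int |u_{1n}u_{2n}u_{3n}| \le C|\alpha_n|\,\|u_{1n}\|_{H^1}\|u_{2n}\|_{H^1}\|u_{3n}\|_{H^1}\le C'|\alpha_n|\,\|u_{2n}\|_{H^1}\|u_{3n}\|_{H^1},
\end{equation*}
where we used that $\|u_{1n}\|_{H^1}$ is bounded. This yields $c\|u_{2n}\|^2\le C'|\alpha_n|\|u_{2n}\|\|u_{3n}\|$, that is, $\|u_{2n}\|\le C|\alpha_n|\|u_{3n}\|$. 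The third equation gives symmetrically $\|u_{3n}\|\le C|\alpha_n|\|u_{2n}\|$. Combining the two, $\|u_{2n}\|\le C^2\alpha_n^2\|u_{2n}\|$. For $n$ large this forces $u_{2n}=0$, and then $u_{3n}=0$, contradicting that $\vec{u}_n$ is a vector solution.

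The main obstacle is the uniform coercivity estimate in the first step, namely controlling the region where $|u_{2n}|>\delta$ using only (f1)–(f2). The Sobolev splitting above handles it because the superquadratic remainder is $o(\|u\|_{H^1}^2)$, and $u_{in}\to0$ in $H^1$ is precisely what the hypothesis \eqref{eq:1.4} supplies. Everything else is a direct H\"older estimate. Note that (f3) for $f_2,f_3$ is never used; for $f_1$ it serves only to make $S_1$ nonempty and $u_{1n}$ bounded.
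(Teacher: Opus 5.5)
Your argument is correct and is essentially the paper's proof (Lemma~\ref{lem:4.7}): test the $u_2$- and $u_3$-equations against $u_2,u_3$, use (f2) for a coercive quadratic part with a remainder of order $\|u\|_{H^1}^{2^*}$ from (f1), and bound the coupling term by $|\alpha|\,\|u_1\|_{L^3}\|u_2\|_{L^3}\|u_3\|_{L^3}$. The only difference is minor. The paper tests with $(0,u_2,u_3)$ at once and gets a uniform gap $\|u_2\|_{\lambda_2}^2+\|u_3\|_{\lambda_3}^2\ge\rho_0$ for every bounded solution with $(u_2,u_3)\neq(0,0)$, with no smallness assumption. You instead use the smallness supplied by the hypothesis to absorb the superquadratic term, and then chain the two one-component estimates.
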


\begin{remark}
For any $\alpha\in \R$, the function $(\omega_1,0,0)$ with $\omega_1\in S_1$ is a solution of \eqref{eq:1.1}.
Hence, a scalar solution satisfying \eqref{eq:1.4} always exists.
The arguments developed in this paper for constructing solutions near $X$ and $Y$ also apply to a neighborhood of $Z$,
which allow us to construct solutions near $Z$ as well.
However, Theorem~\ref{thm:1.3} implies that any solution obtained near $Z$ by this approach must necessarily be a scalar solution.
\end{remark}

From Theorems \ref{thm:1.1} and \ref{thm:1.2}, we obtain the multiplicity of vector solutions of \eqref{eq:1.1}.

\begin{corollary}
Suppose that $f_1$, $f_2$, $f_3$ satisfy (f1)--(f3).
Then there exists $\alpha_\ast$ such that, for all $\alpha$ with $|\alpha|\leq \alpha_\ast$, 
\eqref{eq:1.1} has at least four vector solutions.
\end{corollary}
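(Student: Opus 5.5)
We derive the corollary from Theorems~\ref{thm:1.1} and \ref{thm:1.2}, applied with the roles of the indices permuted. System \eqref{eq:1.1} is invariant under any permutation of the indices $\{1,2,3\}$, provided the nonlinearities are permuted in the same way: the coupling terms $u_2u_3$, $u_3u_1$, $u_1u_2$ are permuted into one another. Since all three $f_i$ satisfy (f1)--(f3), the hypotheses of Theorem~\ref{thm:1.2} hold whichever index plays the role of the vanishing component. This gives four families of vector solutions, defined for $|\alpha|\le\alpha_\ast:=\min\{\alpha_0,\tilde\alpha_0^{(1)},\tilde\alpha_0^{(2)},\tilde\alpha_0^{(3)}\}$:
\begin{itemize}
\item $\vec u_\alpha^{X}$, from Theorem~\ref{thm:1.1}, with ${\rm dist}(\vec u_\alpha^X,X)\to0$;
\item $\vec u_\alpha^{Y_3}$, with ${\rm dist}(\vec u_\alpha^{Y_3},S_1\times S_2\times\{0\})\to0$;
\item $\vec u_\alpha^{Y_2}$, with ${\rm dist}(\vec u_\alpha^{Y_2},S_1\times\{0\}\times S_3)\to0$;
\item $\vec u_\alpha^{Y_1}$, with ${\rm dist}(\vec u_\alpha^{Y_1},\{0\}\times S_2\times S_3)\to0$.
\end{itemize}
Here $\tilde\alpha_0^{(k)}$ is the constant that Theorem~\ref{thm:1.2} provides when component $k$ is the one that vanishes.

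The key step is to show these four solutions are pairwise distinct for small $|\alpha|$. The four limit sets $X,Y_1,Y_2,Y_3$ are compact (Lemma~\ref{lem:2.4}) and pairwise disjoint. Indeed, each $S_i$ consists of nontrivial functions, so $0\notin S_i$, and any two of the sets differ in at least one coordinate, where one has $S_i$ and the other has $\{0\}$. We quantify this with
\[
d_i:=\inf_{\omega\in S_i}\|\omega\|_{H^1}>0,
\]
which is positive because $S_i$ is compact and does not contain $0$. Then any two of the four sets are at distance at least $\delta:=\min_i d_i>0$ in $\H$. Choose $\alpha_\ast$ smaller if necessary so that each family lies within $\delta/3$ of its limit set whenever $|\alpha|\le\alpha_\ast$. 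By the triangle inequality the four solutions are then distinct, and each is a vector solution by the two theorems.

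The only subtle point is uniformity: Theorems~\ref{thm:1.1} and \ref{thm:1.2} state only that the distance tends to zero as $\alpha\to0$. This is exactly what lets us pick $\alpha_\ast>0$ such that all four distances are below $\delta/3$ on $|\alpha|\le\alpha_\ast$. The solution at $\alpha=0$ also needs checking. If the families at $\alpha=0$ are not vector solutions, restrict to $0<|\alpha|\le\alpha_\ast$. In fact Theorem~\ref{thm:1.2} asserts that its solutions are vector solutions for all $|\alpha|\le\tilde\alpha_0$, so as stated the claim holds on the full range.
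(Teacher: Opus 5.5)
Your route is the paper's. The corollary is stated there as an immediate consequence of Theorems~\ref{thm:1.1} and \ref{thm:1.2}: one solution near $X$, and three near $S_1\times S_2\times\{0\}$, $S_1\times\{0\}\times S_3$, $\{0\}\times S_2\times S_3$ via the index symmetry of \eqref{eq:1.1}. Your permutation and separation argument correctly supplies the details for $0<|\alpha|\le\alpha_\ast$.

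The error is in your last paragraph, where you conclude that the claim holds on the full range $|\alpha|\le\alpha_\ast$.
\begin{enumerate}[(a)]
\item A limit as $\alpha\to0$ says nothing about the value at $\alpha=0$, so your $\delta/3$ choice does not separate the four solutions there.
\item The vector property of the $Y$-type solutions comes from Lemma~\ref{lem:4.6}. Its proof uses $\alpha\neq0$ to pass from $-\alpha\int_{\R^N}u_1u_2\varphi\,dx=0$ to $u_1u_2=0$.
\end{enumerate}
So quoting the literal wording of Theorem~\ref{thm:1.2} at $\alpha=0$ does not settle that case, and in fact the statement fails there.

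For a counterexample, take $f_1=f_2=f_3=f$ with $f(\xi)=-\xi+(\xi^+)^p$, where $\xi^+=\max\{\xi,0\}$ and $1<p<\frac{N+2}{N-2}$.
\begin{itemize}
\item Testing $-\Delta u+u=(u^+)^p$ with $u^-=\max\{-u,0\}$ gives $\|u^-\|_{H^1}=0$.
\item By the strong maximum principle and Kwong's uniqueness theorem, the only critical points of $J_i$ in $H^1_{\rm rad}(\R^N)$ are then $0$ and one positive $\omega$.
\item At $\alpha=0$ the decoupled system therefore has exactly one vector solution, $(\omega,\omega,\omega)$.
\item The construction behind Theorem~\ref{thm:1.2} returns $(\omega,\omega,0)$ there, since $\|u_3\|_{H^1}\le\tilde\mu<\frac13\|\omega\|_{H^1}$ forces $u_3=0$.
\end{itemize}
The correct conclusion is four vector solutions for $0<|\alpha|\le\alpha_\ast$.

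For those $\alpha$ you could also get distinctness without the limit. The constructed solutions lie in $X_\mu$ and in the permuted copies of $Y_{\tilde\mu}$. These sets are pairwise disjoint once all the radii are taken below $\frac13\min_i d_i$, which the constructions allow.
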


The existence of vector solutions in Theorem \ref{thm:1.2} shows that the three-wave interaction and 
the Bose-Einstein-type interaction lead to different solution structures in the regime where these interactions are small.
The system of coupled nonlinear Schr\"odinger equations with Bose-Einstein-type interactions is given by
\begin{equation}\label{eq:1.5}
	\left\{\begin{aligned}
	& -\Delta u_1 = f_1(u_1) + \beta_{12} u_1 u_2^2 + \beta_{13} u_1 u_3^2 \quad \text{in } \R^N, \\
	& -\Delta u_2 = f_2(u_2) + \beta_{12} u_1^2 u_2 + \beta_{23} u_2 u_3^2 \quad \text{in } \R^N, \\
	& -\Delta u_3 = f_3(u_3) + \beta_{13} u_1^2 u_3 + \beta_{23} u_2^2 u_3 \quad \text{in } \R^N, \\
	& \hspace{1.0cm} \vec{u} = (u_1,u_2,u_3) \in \H.
	\end{aligned} \right.
\end{equation}
In particular, for $\vec{\beta} = (\beta_{12},\beta_{13},\beta_{23})$, 
there exists no family of vector solutions $\{\vec{u}_{\vec{\beta}}\}$ that satisfies
\begin{equation}\label{eq:1.6}
\lim_{|\vec{\beta}| \to 0} \mathrm{dist}(\vec{u}_{\vec{\beta}}, Y) = 0 
\quad \text{or} \quad
\lim_{|\vec{\beta}| \to 0} \mathrm{dist}(\vec{u}_{\vec{\beta}}, Z) = 0 \quad \text{(see Remark \ref{rmk:4.8})}.
\end{equation}

\medskip

The proofs of Theorems \ref{thm:1.1} and \ref{thm:1.2} are based on the method of Byeon--Jeanjean \cite{BJ1}.  
This method was also applied in \cite{CZ2} to construct positive solutions of the linearly coupled system
\begin{equation}\label{eq:1.7}
	\left\{\begin{aligned}
	& -\Delta u_1 = f_1(u_1) + \alpha u_2 \quad \text{in } \R^N, \\
	& -\Delta u_2 = f_2(u_2) + \alpha u_1 \quad \text{in } \R^N, \\
	& \ \ \vec{u} = (u_1,u_2) \in (H_{\rm rad}^1(\R^N))^2.
	\end{aligned}\right.
\end{equation}
In that case, the solutions converge to elements of $S_1 \times S_2$ as $\alpha \to 0^+$.  
Similarly, in \cite{CZ1}, positive solutions converging to elements of $S_1 \times \{0\}$ or $\{0\} \times S_2$ as $\alpha \to 0^+$ were constructed.
Theorems~\ref{thm:1.1}--\ref{thm:1.3} make explicit both the similarities and the differences in the solution structures 
generated by the three-wave interaction $\int_{\R^N} u_1 u_2 u_3 \, dx$ and by the two-wave interaction $\int_{\R^N} u_1 u_2 \, dx$.
We note that the corresponding results in \cites{CZ1,CZ2} required the nonlinearities to satisfy a slightly stronger condition, 
namely $\lim_{|\xi| \to 0} \frac{f(\xi)}{\xi} \in (-\infty,0)$.  

\medskip

This paper is organized as follows.
In Section 2, we prove several preliminary results.
In Section 3, we give the proof of Theorem \ref{thm:1.1}.
In Section 4, we present the proofs of Theorems \ref{thm:1.2} and \ref{thm:1.3}.

\section{Preliminary}
Since we do not assume that $\lim_{|\xi| \to 0} \frac{f_i(\xi)}{\xi} \in (-\infty,0)$ which was used in \cite{CZ2}, 
the standard decomposition of $f$ into its linear and nonlinear parts is not available.
To overcome this difficulty, we adapt an idea from \cite{HIT}.
For each $i=1,2,3$, define
\begin{align*}
	\lambda_i = -\frac{\,1\,}{2}\varlimsup_{|\xi| \to 0} \frac{f_i(\xi)}{\xi}>0, \qquad 
	g_i(\xi) = f_i(\xi) + \lambda_i\xi.
\end{align*}
Then, the system \eqref{eq:1.1} can be rewritten as follows:
\begin{equation}\label{eq:2.1}
	\left\{\begin{aligned}
	& -  \Delta u_1 + \lambda_1 u_1 = g_1(u_1) + \alpha u_2u_3 \quad \text{ in } \R^N, \\
	& -  \Delta u_2 + \lambda_2 u_2 = g_2(u_2) + \alpha u_3u_1 \quad \text{ in } \R^N, \\
	& -  \Delta u_3 + \lambda_3 u_3 = g_3(u_3) + \alpha u_1u_2 \quad \text{ in } \R^N, \\
	& \hspace{1.0cm} \vec{u}=(u_1,u_2,u_3)\in \H.
	\end{aligned}\right.
\end{equation}
Next, define
\begin{equation*}
	h_i^+(\xi) = \begin{cases}
	\max\{g_i(\xi), 0 \} & (\xi \geq 0), \\
	\min\{g_i(\xi), 0 \} & (\xi < 0 ),
	\end{cases} \qquad 
	h_i^-(\xi) = \begin{cases}
	\max\{-g_i(\xi), 0 \} & (\xi \geq 0), \\
	\min\{-g_i(\xi), 0 \} & (\xi < 0 ).
	\end{cases}
\end{equation*}
Then we have
\begin{align*}
	& g_i(\xi) = h_i^+(\xi) - h_i^-(\xi) \quad \text{ for all } \xi \in \R, \\
	& h_i^+(\xi)\xi \geq 0, \quad h_i^-(\xi)\xi \geq 0 \quad \text{ for all } \xi \in \R.
\end{align*}
Moreover, there exists $\nu>0$ such that $h_i^+$ satisfies 
\begin{equation}\label{eq:2.2}
	h_i^+(\xi) = 0  \quad \text{ for all } |\xi|\leq \nu.
\end{equation}
We then have the following lemma.

\begin{lemma}\label{lem:2.1}
Let $\{u_n\}\subset H_{\rm rad}^1(\R^N)$ be a sequence such that $u_n \rightharpoonup u_0$ weakly in $H^1_{\rm rad}(\R^N)$. 
Then the following hold:
\begin{align}
	\lim_{n\to \infty}\int_{\R^N}h_i^+(u_n)u_n\, dx = \int_{\R^N}h_i^+(u_0)u_0\, dx, \label{eq:2.3}\\ 
	\varliminf_{n\to \infty}\int_{\R^N}h_i^-(u_n)u_n\, dx \geq \int_{\R^N}h_i^-(u_0)u_0\, dx. \label{eq:2.4}
\end{align}
\end{lemma}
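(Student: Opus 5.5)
For \eqref{eq:2.3} I would use Strauss's compact embedding together with a uniform tail bound near zero and growth control at infinity. For \eqref{eq:2.4} I would use Fatou's lemma. The key structural facts are that $h_i^+$ vanishes on $[-\nu,\nu]$ by \eqref{eq:2.2}, and that $h_i^-(\xi)\xi\ge 0$.

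First I would record growth bounds. By (f1), $g_i$ and hence $h_i^+$ satisfies: for every $\varepsilon>0$ there is $C_\varepsilon$ with $|h_i^+(\xi)|\le \varepsilon|\xi|^{2^*-1}+C_\varepsilon|\xi|$ for all $\xi$. Combining this with \eqref{eq:2.2}, for any $p\in(2,2^*)$ we also get $|h_i^+(\xi)\xi|\le \varepsilon|\xi|^{2^*}+C'_{\varepsilon}|\xi|^p$. This holds because on $|\xi|\le\nu$ the left side is $0$, and on the region $\nu\le|\xi|\le R_\varepsilon$ we have $|\xi|^2\le \nu^{2-p}|\xi|^p$. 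Next, $u_n$ is bounded in $H^1$. By Strauss's lemma, $H^1_{\rm rad}\hookrightarrow L^p$ is compact for $2<p<2^*$, since $N\ge 3$. Passing to a subsequence, which suffices because the limit is identified, we get $u_n\to u_0$ in $L^p$ and a.e.

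To prove \eqref{eq:2.3}, I would write $\Phi(\xi)=h_i^+(\xi)\xi$, which is continuous with $|\Phi(\xi)|\le \varepsilon|\xi|^{2^*}+C'_\varepsilon|\xi|^p$. I would then apply the standard Strauss compactness lemma (or argue directly). Truncate at level $M$: on $\{|u_n|>M\}$ the $2^*$-term contributes at most $\varepsilon\sup_n\|u_n\|_{2^*}^{2^*}$, and the $p$-term contributes at most $C'_\varepsilon M^{p-q}\|u_n\|_q^q$ with $q=2^*$, which is small for large $M$. On $\{|u_n|\le M\}$, $\Phi$ is bounded by $C|\xi|^p$ and is uniformly continuous, and strong $L^p$ convergence together with generalized dominated convergence (Vitali, using uniform integrability from $L^p$ convergence) gives convergence. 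The main obstacle is controlling this middle range uniformly. Vanishing of $h_i^+$ near zero is what lets us dominate by $|\xi|^p$ with $p>2$ instead of $|\xi|^2$, where no compactness is available. Finally I let $\varepsilon\to0$. Since every subsequence has a further subsequence along which the limit is the same, the whole sequence converges.

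For \eqref{eq:2.4}, I would pick a subsequence realizing the $\varliminf$ and then extract a further subsequence with $u_n\to u_0$ a.e., using local compactness of the Rellich embedding or the $L^p$ convergence above. Since $h_i^-$ is continuous (being a max or min of continuous functions that agree at $0$, as $g_i(0)=0$), we have $h_i^-(u_n)u_n\to h_i^-(u_0)u_0$ a.e. These functions are nonnegative, so Fatou's lemma gives \eqref{eq:2.4}.
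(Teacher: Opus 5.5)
Your proof is correct. The Fatou argument for \eqref{eq:2.4} matches the paper, but for \eqref{eq:2.3} you take a different route.

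The paper uses the vanishing of $h_i^+$ on $[-\nu,\nu]$ together with Strauss's pointwise decay $|u(x)|\le C_N|x|^{-(N-1)/2}\|u\|_{H^1}$ and the uniform $H^1$-bound. This gives a single ball $B_R$ outside of which $h_i^+(u_n)u_n\equiv 0$ for every $n$. The problem then lives on a bounded domain, where Rellich gives $u_n\to u_0$ in $L^p(B_R)$ for all $p<2^*$, including $p=2$. The splitting $h_i^+(u_n)(u_n-u_0)+(h_i^+(u_n)-h_i^+(u_0))u_0$, combined with the bound $C_\epsilon|\xi|+\epsilon|\xi|^{2^*-1}$, finishes the argument.

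You use \eqref{eq:2.2} differently: to upgrade the quadratic part of the growth bound to $|\xi|^p$ with $2<p<2^*$. You then invoke the global compact embedding $H^1_{\rm rad}(\R^N)\hookrightarrow L^p(\R^N)$ and a Vitali-type argument on all of $\R^N$. This is essentially the Berestycki--Lions/Strauss compactness lemma with $P=\Phi$ and $Q(\xi)=\xi^2+|\xi|^{2^*}$.

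Both routes rest on the radial decay and on the vanishing of $h_i^+$ near $0$. The paper's localization makes the tail vanish identically, so only local compactness and no tightness are needed. Your version hides the decay inside the compact embedding. It must therefore supply uniform integrability and tightness on $\R^N$, which the strong $L^p$ convergence of $u_n$ does provide.

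One technical point in your direct version: the cut-off $\chi_{\{|u_n|\le M\}}$ is discontinuous, so a.e.\ convergence can fail on $\{|u_0|=M\}$. You can fix this in either of two ways:
\begin{enumerate}[(i)]
\item choose $M$ with $|\{|u_0|=M\}|=0$;
\item drop the truncation entirely, since $(|\Phi(u_n)-\Phi(u_0)|-\varepsilon(|u_n|^{2^*}+|u_0|^{2^*}))^+\le C'_\varepsilon(|u_n|^p+|u_0|^p)$ and the right side converges in $L^1$; generalized dominated convergence then gives $\limsup_n\|\Phi(u_n)-\Phi(u_0)\|_{L^1}\le C\varepsilon$ directly.
\end{enumerate}
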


\begin{proof}
The inequality \eqref{eq:2.4} follows from Fatou's lemma.
Thus, it remains to show \eqref{eq:2.3}.
We use the following fact for $H_{\rm rad}^1(\R^N)$ (see \cite{AM}*{Lemma 11.1} or \cite{BL1}):
there exists a constant $C_N>0$ such that, for all $u\in H_{\rm rad}^1(\R^N)$,
\begin{equation*}
	|u(x)|\leq C_N|x|^{-\frac{N-1}{2}}\|u\|_{H^1(\R^N)} \quad \text{ for all } |x|\geq 1.
\end{equation*}
Since $\{u_n\}$ is bounded in $H_{\rm rad}^1(\R^N)$, we can choose a large $R>0$ such that 
\begin{equation*}
	|u_n(x)|\leq \nu \qquad |x|\geq R, \quad n\in \N\cup \{0\}.
\end{equation*}
Then, from \eqref{eq:2.2}, we have
\begin{equation}\label{eq:2.5}
	h_i^+(u_n(x))=0 \qquad |x|\geq R, \quad n\in \N\cup \{0\}.
\end{equation}
Set $B_R=\{x\in \R^N\,|\, |x|\leq R\}$.
By the compactness of the Sobolev embedding on bounded domains, for every $p\in[1,2^*)$, we have
\begin{equation}\label{eq:2.6}
	u_n \to u_0 \quad  \text{ strongly in } L^p(B_R).
\end{equation}
Take an arbitrary $\epsilon>0$.
From (f1) and \eqref{eq:2.2}, there exists a constant $C_\epsilon>0$ such that
\begin{equation*}
	|h_i^+(\xi)|\leq C_\epsilon |\xi|+\epsilon|\xi|^{2^*-1} \qquad (\xi\in \R).
\end{equation*}
From \eqref{eq:2.5}, we have $h_i^+(u_n)u_n=0$ on $\R^N\setminus B_R$ for all $n\in \N\cup \{0\}$.
Hence
\begin{align*}
	\int_{\R^N}h_i^+(u_n)u_n - h_i^+(u_0)u_0\, dx
	& =\int_{B_R}h_i^+(u_n)u_n - h_i^+(u_0)u_0\, dx \\ 
	& =\int_{B_R}h_i^+(u_n)(u_n-u_0)\, dx + \int_{B_R}(h_i^+(u_n)-h_i^+(u_0))u_0\, dx \\ 
	& =:({\rm I})+({\rm II}).
\end{align*}
From \eqref{eq:2.6}, it follows $\displaystyle \lim_{n\to \infty}({\rm II})=0$.
Since $\|u_n\|_{L^{2^*}(\R^N)}$ is bounded, for a constant $C>0$, we have
\begin{align*}
	\left|({\rm I})\right|
	& \leq \int_{B_R}C_\epsilon |u_n||u_n-u_0|
	+\epsilon |u_n|^{2^*-1}|u_n-u_0|\, dx \\
	& \leq C_\epsilon \|u_n\|_{L^2(B_R)} \|u_n-u_0\|_{L^2(B_R)}
	+\epsilon \|u_n\|_{L^{2^*}(B_R)}^{2^*-1}
	\|u_n-u_0\|_{L^{2^*}(B_R)} \\
	& \leq C_\epsilon \|u_n\|_{L^2(B_R)} \|u_n-u_0\|_{L^2(B_R)}
	+\epsilon C.
\end{align*}
Therefore, from \eqref{eq:2.6}, 
\begin{equation*}
	\varlimsup_{n\to \infty}\left|({\rm I})\right|\leq \epsilon C.
\end{equation*}
Since $\epsilon>0$ is arbitrary, conclude that $\displaystyle \lim_{n\to \infty}({\rm I})=0$.
Hence the proof is complete.
\end{proof}

\begin{proposition}\label{prop:2.2}
Let $\{\vec{u}_n\}\subset \H$ be a bounded sequence satisfying $\|I_{\alpha}'(\vec{u}_n)\|_{\H^*} \to 0$ as $n\to \infty$.
Then there exists a subsequence of $\{\vec{u}_n\}$ that converges strongly in $\H$.
\end{proposition}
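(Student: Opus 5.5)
Since $\{\vec{u}_n\}$ is bounded in $\H$, we may pass to a subsequence with $u_{in}\rightharpoonup u_{i0}$ weakly in $H^1_{\rm rad}(\R^N)$ for $i=1,2,3$. Because $H^1_{\rm rad}(\R^N)\hookrightarrow L^p(\R^N)$ is compact for $2<p<2^*$ (Strauss), we also get $u_{in}\to u_{i0}$ strongly in $L^3(\R^N)$, using $N\le 5$ so that $3<2^*$. First we show that $\vec{u}_0$ is a critical point of $I_\alpha$. For fixed $\vec{\varphi}\in\H$, the terms $\int\nabla u_{in}\cdot\nabla\varphi_i$ and $\lambda_i\int u_{in}\varphi_i$ pass to the limit by weak convergence. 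The cubic coupling terms $\int u_{2n}u_{3n}\varphi_1$ pass to the limit by the $L^3$ convergence. For $\int g_i(u_{in})\varphi_i$ we would use local compactness together with the growth bounds $|g_i(\xi)|\le C|\xi|+\epsilon|\xi|^{2^*-1}$; the linear-growth part is controlled by the $L^2$ bound together with a tail estimate, or by density of $C_c^\infty$ test functions. Hence $I_\alpha'(\vec{u}_0)=0$.

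Next, using the rewriting \eqref{eq:2.1}, we test $I_\alpha'(\vec{u}_n)$ against $\vec{u}_n$. Since $\|I_\alpha'(\vec{u}_n)\|_{\H^*}\to0$ and $\vec{u}_n$ is bounded, we obtain
\begin{equation*}
\sum_{i=1}^3\Bigl(\|\nabla u_{in}\|_{L^2}^2+\lambda_i\|u_{in}\|_{L^2}^2+\int_{\R^N}h_i^-(u_{in})u_{in}\,dx\Bigr)
=\sum_{i=1}^3\int_{\R^N}h_i^+(u_{in})u_{in}\,dx+3\alpha\int_{\R^N}u_{1n}u_{2n}u_{3n}\,dx+o(1).
\end{equation*}
By Lemma \ref{lem:2.1}, \eqref{eq:2.3}, the $h_i^+$ terms converge to those of $\vec{u}_0$. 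The cubic term converges by the strong $L^3$ convergence. Therefore the right-hand side tends to the corresponding expression for $\vec{u}_0$. By the identity $I_\alpha'(\vec{u}_0)\vec{u}_0=0$, that limit equals
\begin{equation*}
\sum_i\Bigl(\|\nabla u_{i0}\|_{L^2}^2+\lambda_i\|u_{i0}\|_{L^2}^2+\int h_i^-(u_{i0})u_{i0}\Bigr).
\end{equation*}

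On the left-hand side, each of the three nonnegative pieces is weakly lower semicontinuous. The Dirichlet and $L^2$ norms are lower semicontinuous by weak convergence, and the $h_i^-$ term by \eqref{eq:2.4}, i.e. Fatou's lemma, which uses $h_i^-(\xi)\xi\ge0$. Since the sum of the $\liminf$s is at most the $\limsup$ of the sum, which equals its value at $\vec{u}_0$, every piece must actually converge. In particular $\|\nabla u_{in}\|_{L^2}^2+\lambda_i\|u_{in}\|_{L^2}^2\to\|\nabla u_{i0}\|_{L^2}^2+\lambda_i\|u_{i0}\|_{L^2}^2$. This is an equivalent norm on $H^1$ because $\lambda_i>0$, so norm convergence together with weak convergence gives strong convergence in $\H$.

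The main obstacle is the first step: showing $I_\alpha'(\vec{u}_0)=0$, specifically passing to the limit in $\int g_i(u_{in})\varphi$ when $g_i$ has only the weak assumption (f2) near $0$. I would split $g_i=h_i^+-h_i^-$. The $h_i^+$ part is handled as in Lemma \ref{lem:2.1}: it vanishes outside $B_R$ and is compact inside. For $h_i^-$, the bound $|h_i^-(\xi)|\le C(|\xi|+|\xi|^{2^*-1})$ gives uniform integrability of $h_i^-(u_{in})\varphi$ on $B_R$. Outside $B_R$ the contribution is small uniformly in $n$ by the $L^2$ bound and the smallness of $\|\varphi\|_{L^2(|x|>R)}$. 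Combining these with a.e. convergence (Vitali) gives the limit.
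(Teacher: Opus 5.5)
Your proof is correct and follows the same route as the paper. You take a weak limit with Strauss compactness in $L^p$ for $2<p<2^*$ (where $N\le 5$ gives $3<2^*$ for the cubic term), identify $\vec{u}_0$ as a critical point, and test with $\vec{u}_n$, using Lemma~\ref{lem:2.1} (convergence of the $h_i^+$ terms, Fatou for the $h_i^-$ terms) to force $\sum_i(\|\nabla u_{in}\|_{L^2}^2+\lambda_i\|u_{in}\|_{L^2}^2)$ to converge to its value at $\vec{u}_0$. Moving the $h_i^-$ term to the left-hand side is only a rearrangement of the paper's $\limsup$ inequality, and your Vitali and tail argument fills in the step $I_\alpha'(\vec{u}_0)=0$, which the paper asserts without detail.
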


\begin{proof}
Let $\vec{u}_n=(u_{1n},u_{2n},u_{3n}) \in \H$ be as in Proposition \ref{prop:2.2}.
Then, up to a subsequence, there exists $\vec{u}_0=(u_{10},u_{20},u_{30})\in \H$ such that
\begin{equation}\label{eq:2.7}
	\vec{u}_n \to \vec{u}_0 \quad \text{ weakly in } \H \text{ and strongly in } (L^p(\R^N))^3 \ (2< p<2^*).
\end{equation}
For any $\vec{\varphi}\in (C_0^\infty(\R^N))^3$, we have $I_{\alpha}'(\vec{u}_n)\vec{\varphi} \to I_\alpha'(\vec{u}_0)\vec{\varphi} = 0$.
Hence $\vec{u}_0$ is a critical point of $I_\alpha$.
Since $I_\alpha'(\vec{u}_n)\vec{u}_n =o_n(1)$ as $n\to \infty$, we have
\begin{align*}
	\sum_{i=1}^3\left(\|\nabla u_{in}\|_{L^2}^2 + \lambda_i\|u_{in}\|_{L^2}^2\right)
	& = \sum_{i=1}^3\int_{\R^N}h_i^+(u_{in})u_{in} - h_i^-(u_{in})u_{in}\,dx \\
	& \hspace{2cm} +3\alpha\int_{\R^N}u_{1n}u_{2n}u_{3n}\,dx + o_n(1).
\end{align*}
From Lemma \ref{lem:2.1} and $I_\alpha'(\vec{u}_0)\vec{u}_0=0$, we obtain
\begin{align*}
	\varlimsup_{n\to \infty} & \sum_{i=1}^3\left(\|\nabla u_{in}\|_{L^2}^2 + \lambda_i\|u_{in}\|_{L^2}^2\right) \\
	& \leq \sum_{i=1}^3\int_{\R^N}h_i^+(u_{i0})u_{i0} - h_i^-(u_{i0})u_{i0}\,dx + 3\alpha\int_{\R^N}u_{10}u_{20}u_{30}\, dx \\
	& = \sum_{i=1}^3\left(\|\nabla u_{i0}\|_{L^2}^2 + \lambda_i\|u_{i0}\|_{L^2}^2\right).
\end{align*}
Therefore, by \eqref{eq:2.7}, we conclude that $\vec{u}_n \to \vec{u}_0$ strongly in $\H$.
\end{proof}

As in Proposition \ref{prop:2.2}, the next result also holds.

\begin{proposition}\label{prop:2.3}
Let $\{\alpha_n\}$ be a sequence with $\alpha_n\to 0$ and 
let $\{\vec{u}_n\}\subset \H$ be a bounded sequence satisfying $\|I_{\alpha_n}'(\vec{u}_n)\|_{\H^*} \to 0$ as $n\to \infty$.
Then there exists a subsequence of $\{\vec{u}_n\}$ that converges strongly in $\H$.
\end{proposition}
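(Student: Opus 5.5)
The plan is to repeat the argument of Proposition \ref{prop:2.2}, now keeping track of the fact that the coupling constant $\alpha_n$ varies with $n$. Since $\{\vec{u}_n\}$ is bounded in $\H$, the compactness of the radial embedding (Strauss) lets us pass to a subsequence along which $\vec{u}_n \rightharpoonup \vec{u}_0$ weakly in $\H$ and $\vec{u}_n \to \vec{u}_0$ strongly in $(L^p(\R^N))^3$ for $2<p<2^*$.

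\textbf{Step 1: the limit solves the uncoupled equations.} For any $\vec{\varphi}\in (C_0^\infty(\R^N))^3$ we have
\begin{equation*}
I_{\alpha_n}'(\vec{u}_n)\vec{\varphi} = \sum_{i=1}^3 J_i'(u_{in})\varphi_i - \alpha_n\int_{\R^N}\bigl(u_{2n}u_{3n}\varphi_1+u_{3n}u_{1n}\varphi_2+u_{1n}u_{2n}\varphi_3\bigr)\,dx .
\end{equation*}
The coupling integrals are bounded, because $u_{jn}$ is bounded in $L^3$ (here $3<2^*$ since $N\le 5$). Since $\alpha_n\to0$, the coupling term therefore tends to $0$. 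The terms $J_i'(u_{in})\varphi_i$ converge to $J_i'(u_{i0})\varphi_i$ by weak convergence together with local compactness and (f1). Hence $J_i'(u_{i0})=0$ for each $i$, i.e.\ $\vec{u}_0$ is a critical point of $I_0$. This gives
\begin{equation*}
\|\nabla u_{i0}\|_{L^2}^2+\lambda_i\|u_{i0}\|_{L^2}^2=\int_{\R^N} h_i^+(u_{i0})u_{i0}-h_i^-(u_{i0})u_{i0}\,dx .
\end{equation*}

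\textbf{Step 2: convergence of norms.} Testing with $\vec{u}_n$ itself, boundedness gives $I_{\alpha_n}'(\vec{u}_n)\vec{u}_n=o_n(1)$. The term $3\alpha_n\int u_{1n}u_{2n}u_{3n}\,dx$ is $O(|\alpha_n|)=o_n(1)$, again by the $L^3$ bound. Therefore
\begin{equation*}
\sum_{i=1}^3\bigl(\|\nabla u_{in}\|_{L^2}^2+\lambda_i\|u_{in}\|_{L^2}^2\bigr)=\sum_{i=1}^3\int_{\R^N}h_i^+(u_{in})u_{in}-h_i^-(u_{in})u_{in}\,dx+o_n(1).
\end{equation*}
By Lemma \ref{lem:2.1}, the $h_i^+$ part converges and the $h_i^-$ part satisfies a $\liminf$ inequality. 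Taking $\limsup$ and using Step 1, the $\limsup$ of the left-hand side is at most $\sum_i(\|\nabla u_{i0}\|_{L^2}^2+\lambda_i\|u_{i0}\|_{L^2}^2)$. Weak lower semicontinuity gives the reverse inequality. Because $\lambda_i>0$, the quantity $\|\nabla u\|_{L^2}^2+\lambda_i\|u\|_{L^2}^2$ is an equivalent norm on $H^1$. Norm convergence combined with weak convergence then yields strong convergence in $\H$.

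\textbf{Main obstacle.} The argument is routine. The only point needing care is that the coupling terms vanish uniformly, which requires $u_{jn}$ to be bounded in $L^3$, i.e.\ $3\le 2^*$. This holds since $N\le5$, so the restriction $3\leq N\leq 5$ enters exactly here.
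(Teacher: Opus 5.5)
Your proposal is correct and follows the paper's proof essentially line by line. The steps match: extract a subsequence converging weakly in $\H$ and strongly in $L^p$; identify the limit as a critical point of $I_0$, since the coupling terms are $O(|\alpha_n|)$; then apply Lemma \ref{lem:2.1} to $I_{\alpha_n}'(\vec{u}_n)\vec{u}_n=o_n(1)$ to bound the $\limsup$ of the $\lambda_i$-norms by those of $\vec{u}_0$.

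One small correction to your closing remark: mere boundedness of $\int u_{1n}u_{2n}u_{3n}\,dx$ only needs $3\le 2^*$, i.e.\ $N\le 6$. The strict condition $3<2^*$ (so $N\le 5$) is really used in Proposition \ref{prop:2.2}, where $\alpha$ is fixed and one needs $\int u_{1n}u_{2n}u_{3n}\,dx\to\int u_{10}u_{20}u_{30}\,dx$ via the compact radial embedding into $L^3$.
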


\begin{proof}
Let $\vec{u}_n=(u_{1n},u_{2n},u_{3n}) \in \H$ be as in Proposition~\ref{prop:2.3}.
Then, up to a subsequence, there exists $\vec{u}_0=(u_{10},u_{20},u_{30})\in \H$ such that
\begin{equation}\label{eq:2.8}
	\vec{u}_n \to \vec{u}_0 \quad \text{ weakly in } \H \text{ and strongly in } (L^p(\R^N))^3 \ (2< p<2^*).
\end{equation}
For any $\vec{\varphi}\in (C_0^\infty(\R^N))^3$, we have $I_{\alpha_n}'(\vec{u}_n)\vec{\varphi} \to I_0'(\vec{u}_0)\vec{\varphi} = 0$.
Hence $\vec{u}_0$ is a critical point of $I_0$.
Since $I_{\alpha_n}'(\vec{u}_n)\vec{u}_n =o_n(1)$ as $n\to \infty$, we have
\begin{align*}
	\sum_{i=1}^3\left(\|\nabla u_{in}\|_{L^2}^2 + \lambda_i\|u_{in}\|_{L^2}^2\right)
	& = \sum_{i=1}^3\int_{\R^N}h_i^+(u_{in})u_{in} - h_i^-(u_{in})u_{in}\,dx \\
	& \hspace{2cm} +3\alpha_n\int_{\R^N}u_{1n}u_{2n}u_{3n}\,dx + o_n(1).
\end{align*}
From Lemma \ref{lem:2.1} and $I_0'(\vec{u}_0)\vec{u}_0=0$, we obtain
\begin{align*}
	\varlimsup_{n\to \infty} \sum_{i=1}^3\left(\|\nabla u_{in}\|_{L^2}^2 + \lambda_i\|u_{in}\|_{L^2}^2\right) 
	& \leq \sum_{i=1}^3\int_{\R^N}h_i^+(u_{i0})u_{i0} - h_i^-(u_{i0})u_{i0}\,dx \\
	& = \sum_{i=1}^3\left(\|\nabla u_{i0}\|_{L^2}^2 + \lambda_i\|u_{i0}\|_{L^2}^2\right).
\end{align*}
Therefore, by \eqref{eq:2.8}, we conclude that $\vec{u}_n \to \vec{u}_0$ strongly in $\H$.
\end{proof}

We provide a proof of the following lemma to make the paper self-contained.

\begin{lemma}\label{lem:2.4}
$S_i$ is compact in $H_{\rm rad}^1(\R^N)$.
\end{lemma}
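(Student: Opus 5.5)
To prove Lemma~\ref{lem:2.4}, I take an arbitrary sequence $\{\omega_n\}\subset S_i$ and show that it has a subsequence converging strongly in $H^1_{\rm rad}(\R^N)$ to an element of $S_i$. The natural route is to reduce this to the scalar version of Proposition~\ref{prop:2.2}. Consider the vectors $\vec{u}_n=(\omega_n,0,0)$ (placing $\omega_n$ in the $i$-th slot). Each is a critical point of $I_0$, so $I_0'(\vec{u}_n)=0$. By Proposition~\ref{prop:2.3} with $\alpha_n=0$, or directly by the argument of Proposition~\ref{prop:2.2} with $\alpha=0$, strong convergence of a subsequence follows once I know that $\{\omega_n\}$ is bounded in $H^1$. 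So the work splits into two steps: boundedness, and identifying the limit as a least energy critical point.

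\textbf{Boundedness} is the step I expect to be the main obstacle. Every $\omega\in S_i$ has $J_i(\omega)=m_i$, the least energy level. Every critical point also satisfies the Pohozaev identity
\begin{equation*}
\frac{N-2}{2}\|\nabla\omega\|_{L^2}^2 = N\int_{\R^N}F_i(\omega)\,dx ,
\end{equation*}
which is valid under (f1)--(f2) by Berestycki--Lions. Combining it with the energy gives $J_i(\omega)=\frac1N\|\nabla\omega\|_{L^2}^2$, so $\|\nabla\omega_n\|_{L^2}^2=Nm_i$ is constant.

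To bound the $L^2$ norm, I use the Nehari identity $\|\nabla\omega\|_{L^2}^2+\lambda_i\|\omega\|_{L^2}^2+\int h_i^-(\omega)\omega=\int h_i^+(\omega)\omega$. Since $h_i^+$ vanishes on $|\xi|\le\nu$ and satisfies (f1), it obeys $|h_i^+(\xi)\xi|\le \varepsilon|\xi|^2+C_\varepsilon|\xi|^{2^*}$ for any $\varepsilon>0$; in fact one may even take $\varepsilon=0$ with the bound $C|\xi|^{2^*}$, because $h_i^+(\xi)=0$ near $0$. Sobolev's inequality then gives
\begin{equation*}
\lambda_i\|\omega_n\|_{L^2}^2\le C\|\omega_n\|_{L^{2^*}}^{2^*}\le C'\|\nabla\omega_n\|_{L^2}^{2^*}=C'(Nm_i)^{2^*/2}.
\end{equation*}
This yields boundedness in $H^1$.

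\textbf{Identifying the limit.} With boundedness established, Proposition~\ref{prop:2.3}, applied to $\vec{u}_n=(\omega_n,0,0)$ with $\alpha_n\equiv0$, gives a subsequence with $\omega_n\to\omega_0$ strongly in $H^1$. By continuity, $\omega_0$ is a critical point of $J_i$ and $J_i(\omega_0)=m_i$. To conclude $\omega_0\in S_i$ I must also check that $\omega_0\neq0$; this holds because $J_i(\omega_0)=m_i>0$, or alternatively because $\|\nabla\omega_0\|_{L^2}^2=Nm_i>0$. Hence $\omega_0\in S_i$, and $S_i$ is compact.
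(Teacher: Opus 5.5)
Your proof is correct and follows essentially the same route as the paper: the Pohozaev identity gives $\|\nabla\omega\|_{L^2}^2 = N c_i$, the Nehari identity together with $|h_i^+(\xi)\xi|\le C|\xi|^{2^*}$ and Sobolev's inequality bounds the $L^2$ norm, and Proposition~\ref{prop:2.2} then gives a strongly convergent subsequence. Your explicit check that the limit is a nontrivial radial critical point at level $c_i$, and hence lies in $S_i$, spells out a closedness step that the paper leaves implicit.
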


\begin{proof}
We first show that $S_i$ is bounded in $H_{\rm rad}^1(\R^N)$.
Let $\omega_i \in S_i$.
Then $\omega_i$ satisfies the Pohozaev identity $\frac{N-2}{2}\|\nabla \omega_i\|_{L^2}^2 = N \int_{\R^N}F_i(\omega_i)\, dx$.
Hence,
\begin{equation*}
	c_i = J_i(\omega_i) 
	= \frac{\,1\,}{2}\|\nabla \omega_i\|_{L^2}^2 - \int_{\R^N}F_i(\omega_i)\, dx 
	= \frac{\,1\,}{N}\|\nabla \omega_i\|_{L^2}^2.
\end{equation*}
Thus the set $\{\|\nabla \omega_i\|_{L^2} \mid \omega_i\in S_i\}$ is bounded.
Since $h_i^+(\xi)$ satisfies \eqref{eq:2.2} and (f1), there exists a constant $C>0$ such that
\begin{equation*}
	|h_i^+(\xi)\xi|\leq C|\xi|^{2^*} \quad \text{ for all } \xi\in \R.
\end{equation*}
Using $J_i'(\omega_i)\omega_i=0$, we obtain
\begin{equation*}
	\|\nabla \omega_i\|_{L^2}^2 + \lambda_i\|\omega_i\|_{L^2}^2 
	\leq \int_{\R^N}h_i^+(\omega_i)\omega_i\,dx 
	\leq C\|\omega_i\|_{L^{2^*}}^{2^*} 
	\leq C'\|\nabla \omega_i\|_{L^2}^{2^*},
\end{equation*}
for a constant $C'>0$.
This shows that $\left\{\|\omega_i\|_{L^2}\,|\,\omega_i\in S_i \right\}$ is bounded.
Consequently, $S_i$ is bounded in $H_{\rm rad}^1(\R^N)$.
Hence, by Proposition~\ref{prop:2.2}, any sequence in $S_i$ has a strongly convergent subsequence.
Therefore, $S_i$ is compact in $H_{\rm rad}^1(\R^N)$.
\end{proof}

\begin{remark}
The compactness of all least energy solutions with a maximum at the origin, 
not necessarily radially symmetric, is also shown in \cite{BJ1}*{Proposition 1}.
\end{remark}

\section{Proof of Theorem \ref{thm:1.1}}

In this section, we prove Theorem \ref{thm:1.1}.
Our approach builds on the methods of \cite{BJ1}, which constructs peak solutions for singularly perturbed problems, 
and \cite{CZ2}, which develops solutions for linearly coupled systems. 
We adapt these techniques to the three-wave interaction setting to establish the existence of vector solutions.

\subsection{Neighborhood of $X=S_1\times S_2\times S_3$}
For $c\in \R$, we define the level set of $I_\alpha$ by 
\begin{equation*}
	[I_\alpha \leq c] = \left\{\vec{u}\in \H \,\left|\, I_\alpha(\vec{u}) \leq c \right.\right\}.
\end{equation*}
Choose a constant $\mu>0$ such that
\begin{equation*}
	0<\mu<\frac{\,1\,}{3}\inf\left\{\left.\|\omega_i\|_{H^1}\,\right|\, \omega_i\in S_i \ (i=1,2,3) \right\}
\end{equation*}
and define the $\mu$-neighborhood of $X$ by
\begin{equation*}
	X_\mu=\{\vec{u} \in \H\,|\, {\rm dist}(\vec{u}, X)\leq \mu\}.
\end{equation*}
By this choice of $\mu$, all components of any element in $X_{2\mu}$ are nontrivial. 
Within $X_{2\mu}$, the following compactness result holds.

\begin{lemma}\label{lem:3.1}
Let $\{\alpha_n\}$ and $\{\delta_n\}$ be sequences with $\alpha_n \to 0$ and $\delta_n \to 0^+$.
Suppose that a sequence $\{\vec{u}_n\}$ satisfies
\begin{equation}\label{eq:3.1}
	\vec{u}_n \in X_{2\mu}\cap [I_{\alpha_n} \leq c_1+c_2+c_3+\delta_n] \quad \text{ and } \quad 
	\|I_{\alpha_n}'(\vec{u}_n)\|_{\H^*}\to 0 \text{ as } n\to \infty.
\end{equation}
Then 
\begin{equation}\label{eq:3.2}
	\lim_{n\to \infty}{\rm dist}(\vec{u}_n, X)=0.
\end{equation}
\end{lemma}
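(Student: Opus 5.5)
Since $X_{2\mu}$ is bounded (it is a $2\mu$-neighborhood of the compact set $X$, by Lemma~\ref{lem:2.4}), the sequence $\{\vec{u}_n\}$ is bounded in $\H$. Together with $\|I_{\alpha_n}'(\vec{u}_n)\|_{\H^*}\to 0$, Proposition~\ref{prop:2.3} yields a subsequence with $\vec{u}_n\to\vec{u}_0=(u_{10},u_{20},u_{30})$ strongly in $\H$, where $\vec{u}_0$ is a critical point of $I_0$. Because the coupling is decoupled at $\alpha=0$, this means each $u_{i0}$ is a critical point of $J_i$. Since $X_{2\mu}$ is closed, $\vec{u}_0\in X_{2\mu}$. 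By the choice of $\mu$ we have $\|u_{i0}\|_{H^1}\geq 3\mu-2\mu>0$, so each $u_{i0}$ is nontrivial.

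Next I pass to the limit in the energy. Strong convergence in $\H$, continuity of $J_i$ on $H^1$ (from (f1), (f2)), and the bound $\alpha_n\int u_{1n}u_{2n}u_{3n}\to 0$ (Hölder with $L^3$ and $N\le5$, boundedness in $H^1$) together give
\[
I_0(\vec{u}_0)=\sum_{i=1}^3 J_i(u_{i0})=\lim_{n\to\infty}I_{\alpha_n}(\vec{u}_n)\le c_1+c_2+c_3 .
\]
Each $u_{i0}$ is a nontrivial radial critical point of $J_i$, so by the definition of the least energy level (for nontrivial radial solutions; in \cite{BL1} this coincides with the least energy among all nontrivial solutions, via Pohozaev) we have $J_i(u_{i0})\ge c_i$. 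Combined with the sum inequality, this forces $J_i(u_{i0})=c_i$ for every $i$, so $u_{i0}\in S_i$ and $\vec{u}_0\in X$.

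Hence ${\rm dist}(\vec{u}_n,X)\le\|\vec{u}_n-\vec{u}_0\|_{\H}\to0$ along the subsequence. Since the argument applies to any subsequence, a standard subsequence-of-subsequence argument gives \eqref{eq:3.2} for the whole sequence. The only delicate point is the equality $J_i\ge c_i$ for nontrivial critical points, which needs $c_i$ to be the ground state level among all nontrivial radial solutions; I will invoke that characterization from \cite{BL1}. Everything else is routine once Proposition~\ref{prop:2.3} supplies compactness.
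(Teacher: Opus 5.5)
Your proposal is correct and follows essentially the same route as the paper. You get compactness from Proposition~\ref{prop:2.3} on the bounded set $X_{2\mu}$, obtain nontriviality of each $u_{i0}$ from the choice of $\mu$, and squeeze $\sum_i J_i(u_{i0})$ between $c_1+c_2+c_3$ and the energy bound to conclude $u_{i0}\in S_i$. The only differences are cosmetic: you use the subsequence-of-subsequence principle where the paper argues by contradiction, and you make explicit some details the paper leaves implicit (the bound $\|u_{i0}\|_{H^1}>\mu$ and the vanishing of $\alpha_n\int u_{1n}u_{2n}u_{3n}$).
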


\begin{proof}
Suppose that \eqref{eq:3.2} does not hold.
Then, up to a subsequence, we have
\begin{equation}\label{eq:3.3}
	\lim_{n\to \infty}{\rm dist}(\vec{u}_n, X)>0.
\end{equation}
Since ${X_{2\mu}}$ is bounded, by Proposition \ref{prop:2.3}, up to a further subsequence, 
$\vec{u}_n \to \vec{u}_0$ strongly in $\H$ for some $\vec{u}_0=(u_{10},u_{20},u_{30})$. 
Then $\vec{u}_0\in X_{2\mu}$ and $I_0'(\vec{u}_0)=0$.
Hence each $u_{i0}$ is a nontrivial critical point of $J_i$, and $J_i(u_{i0})\geq c_i$.
Therefore
\begin{equation*}
	\lim_{n\to \infty}I_{\alpha_n}(\vec{u}_n) = \sum_{i=1}^3J_i(u_{i0})\geq c_1+c_2+c_3.
\end{equation*}
Since ${\displaystyle \varlimsup_{n\to \infty}}I_{\alpha_n}(\vec{u}_n)\leq c_1+c_2+c_3$, we deduce that $J_i(u_{i0})=c_i$.
In particular, $\vec{u}_0\in X$, which contradicts \eqref{eq:3.3}.
Thus, \eqref{eq:3.2} holds.
\end{proof}

\begin{proposition}\label{prop:3.2}
There exist $\alpha_1>0$, $\delta_1>0$ and $\rho>0$ such that, for all $\alpha$ with $|\alpha| \leq \alpha_1$, we have
\begin{equation*}
	\|I_\alpha'(\vec{u})\|_{\H^*}\geq \rho \quad \text{ for all } \vec{u}\in (X_{2\mu}\setminus X_{\mu})\cap[I_\alpha\leq c_1+c_2+c_3+\delta_1]
\end{equation*}
\end{proposition}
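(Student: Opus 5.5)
We argue by contradiction and reduce everything to Lemma~\ref{lem:3.1}. Suppose the statement fails. Then no choice of $\alpha_1,\delta_1,\rho$ works. Taking $\alpha_1=\delta_1=\rho=1/n$, we obtain sequences $\alpha_n\to 0$, $\delta_n\to 0^+$ and
\begin{equation*}
	\vec{u}_n\in (X_{2\mu}\setminus X_\mu)\cap[I_{\alpha_n}\leq c_1+c_2+c_3+\delta_n] \quad\text{with}\quad \|I_{\alpha_n}'(\vec{u}_n)\|_{\H^*}<\tfrac1n .
\end{equation*}
Concretely, for each $n$ there exist $|\alpha_n|\le 1/n$ and $\vec u_n$ in the set above (with $\delta_1=1/n$) whose derivative has norm less than $1/n$.

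This sequence satisfies exactly the hypotheses \eqref{eq:3.1} of Lemma~\ref{lem:3.1}, since $X_{2\mu}\setminus X_\mu\subset X_{2\mu}$. Lemma~\ref{lem:3.1} therefore gives ${\rm dist}(\vec{u}_n,X)\to 0$. On the other hand, $\vec{u}_n\notin X_\mu$ means ${\rm dist}(\vec{u}_n,X)>\mu$ for every $n$. This is a contradiction, which proves the proposition.

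The work has already been done in Lemma~\ref{lem:3.1}, which rests on Proposition~\ref{prop:2.3} and the boundedness of $X_{2\mu}$. The boundedness of $X_{2\mu}$ follows from Lemma~\ref{lem:2.4}. The only point needing care is the quantifier negation. The three constants are chosen simultaneously, so shrinking all of them along $1/n$ produces the required sequences $\alpha_n\to0$, $\delta_n\to0^+$ and $\|I'_{\alpha_n}(\vec u_n)\|\to0$ in one step. No genuine obstacle is expected.
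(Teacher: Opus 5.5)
Your proposal is correct and follows the paper's own proof: negate the statement with $\alpha_1=\delta_1=\rho=1/n$ to get $\alpha_n\to0$, $\delta_n\to0^+$ and $\vec u_n\in(X_{2\mu}\setminus X_\mu)\cap[I_{\alpha_n}\le c_1+c_2+c_3+\delta_n]$ with $\|I_{\alpha_n}'(\vec u_n)\|_{\H^*}\to0$. Lemma~\ref{lem:3.1} then gives ${\rm dist}(\vec u_n,X)\to0$, which contradicts ${\rm dist}(\vec u_n,X)>\mu$.
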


\begin{proof}
We argue by contradiction. 
Suppose that Proposition \ref{prop:3.2} is false.
Then there exist sequences $\{\alpha_n\}$, $\{\delta_n\}$ with $\alpha_n\to 0$ and $\delta_n\to 0^+$, and a sequence $\{\vec{u}_n\}$ such that
\begin{equation*}
	\vec{u}_n \in X_{2\mu}\cap [I_{\alpha_n}\leq c_1+c_2+c_3+\delta_n] \quad \text{ and } \quad 
	\|I_{\alpha_n}'(\vec{u}_n)\|_{\H^*}\to 0. 
\end{equation*}
From Lemma \ref{lem:3.1}, it follows that $\displaystyle \lim_{n\to \infty}{\rm dist}(\vec{u}_n, X)=0$.
This contradicts the fact that $\vec{u}_n\notin X_\mu$.
Hence Proposition \ref{prop:3.2} holds.
\end{proof}

\subsection{Minimax value}

For $\omega_i\in S_i$, we define the path $\sigma_i:\R\to H^1(\R^N)$ by 
\begin{equation*}
	[\sigma_i(s)](x) = \omega_i(e^{-s}x)  \qquad (s\in \R).
\end{equation*}
We also define the functional $P_i\in C(H^1(\R^N),\R)$ by
\begin{equation*}
	P_i(u) = \frac{N-2}{2}\|\nabla u\|_{L^2}^2 - N\int_{\R^N}F_i(u)\, dx.
\end{equation*}
Since the equality $P_i(u)=0$ is the Pohozaev identity, we have $P_i(\omega_i)=0$.
Moreover, the following holds.

\begin{lemma}\label{lem:3.3}
\begin{enumerate}[(i)]

\item $\sigma_i(0)=\omega_i$ and $\sigma_i(s)>0$ for all $s\in \R$

\item $\displaystyle \lim_{s\to -\infty}\|\sigma_i(s)\|_{H^1} =0$, 
$\displaystyle \lim_{s\to \infty}\|\sigma_i(s)\|_{H^1} =\infty$.

\item $J_i(\sigma_i(s))<c_i=J_i(\sigma_i(0))$ for all $s\not=0$.

\item $P_i(\sigma_i(s))= \frac{N-2}{2}(1-e^{2s})e^{(N-2)s}\|\nabla \omega_i\|_{L^2}^2$. 
In particular, $P_i(\sigma_i(0))=0$.

\end{enumerate}
\end{lemma}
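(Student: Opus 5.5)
The plan is to compute everything from the scaling identities for $u(e^{-s}x)$. For any $u\in H^1(\R^N)$ and $t=e^{s}$, the change of variables $x=ty$ gives
\begin{equation*}
	\|\nabla \sigma_i(s)\|_{L^2}^2 = e^{(N-2)s}\|\nabla\omega_i\|_{L^2}^2,\qquad
	\int_{\R^N}F_i(\sigma_i(s))\,dx = e^{Ns}\int_{\R^N}F_i(\omega_i)\,dx,\qquad
	\|\sigma_i(s)\|_{L^2}^2=e^{Ns}\|\omega_i\|_{L^2}^2 .
\end{equation*}

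For the first assertion, $\sigma_i(0)=\omega_i$ is immediate. Positivity follows because $\omega_i$ is a positive least energy solution. This is the one point needing care, since $S_i$ was defined as all radial least energy critical points. I would invoke the Berestycki--Lions fact (\cite{BL1}) that least energy solutions do not change sign. Alternatively, one can replace $\omega_i$ by $|\omega_i|$, which has the same energy and Pohozaev value and is therefore again a least energy solution; by the strong maximum principle this makes $\omega_i$ positive up to sign. Since positivity is preserved by dilation, $\sigma_i(s)>0$ for every $s$.

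For the second assertion, the $H^1$-norm squared of $\sigma_i(s)$ equals $e^{(N-2)s}\|\nabla\omega_i\|_{L^2}^2+e^{Ns}\|\omega_i\|_{L^2}^2$. Since $N\ge3$, both exponents are positive, so this tends to $0$ as $s\to-\infty$ and to $\infty$ as $s\to\infty$.

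For the energy, the Pohozaev identity (used in Lemma \ref{lem:2.4}) gives $\int F_i(\omega_i)=\frac{N-2}{2N}\|\nabla\omega_i\|_{L^2}^2$. Hence
\begin{equation*}
	J_i(\sigma_i(s))=\|\nabla\omega_i\|_{L^2}^2\Big(\tfrac12 e^{(N-2)s}-\tfrac{N-2}{2N}e^{Ns}\Big)=:\|\nabla\omega_i\|_{L^2}^2\,\phi(s).
\end{equation*}
We have $\phi'(s)=\frac{N-2}{2}e^{(N-2)s}(1-e^{2s})$, which is positive for $s<0$ and negative for $s>0$. So $s=0$ is the strict global maximum, with value $\phi(0)=\frac1N$, and therefore $J_i(\sigma_i(0))=c_i$. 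Since $\|\nabla\omega_i\|_{L^2}\neq0$, strictness gives the third assertion.

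The fourth assertion follows from the same scaling computation:
\begin{equation*}
	P_i(\sigma_i(s))=\tfrac{N-2}{2}e^{(N-2)s}\|\nabla\omega_i\|^2-\tfrac{N-2}{2}e^{Ns}\|\nabla\omega_i\|^2,
\end{equation*}
which is exactly the stated formula and vanishes at $s=0$.

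The only nontrivial step is the positivity in the first assertion; everything else is direct calculation.
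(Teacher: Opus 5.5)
Your proof of (ii)--(iv) is the paper's argument: the same scaling identities, the Pohozaev identity to eliminate $\int_{\R^N}F_i(\omega_i)\,dx$, and the fact that $\frac{d}{ds}J_i(\sigma_i(s))=P_i(\sigma_i(s))=\frac{N-2}{2}(1-e^{2s})e^{(N-2)s}\|\nabla\omega_i\|_{L^2}^2$ changes sign only at $s=0$. Your $\phi'$ is exactly this expression divided by $\|\nabla\omega_i\|_{L^2}^2$.

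The part that needs correcting is the positivity claim in (i).
\begin{itemize}
\item Replacing $\omega_i$ by $|\omega_i|$ does not work here. The paper does not assume $f_i$ odd, so $\int_{\R^N}F_i(|\omega_i|)\,dx$ need not equal $\int_{\R^N}F_i(\omega_i)\,dx$, and then $J_i$ and $P_i$ are not preserved. Even when they are, swapping in a different function tells you nothing about the given $\omega_i$.
\item More fundamentally, $\sigma_i(s)>0$ cannot hold for every $\omega_i\in S_i$. Take $f_i$ odd, e.g.\ $f_i(u)=-u+|u|^{p-1}u$ with $1<p<\frac{N+2}{N-2}$. Then $J_i$ is even, so both $\omega_i$ and $-\omega_i$ lie in $S_i$, and every dilation of $-\omega_i$ is negative.
\item So constant sign is the most you can prove. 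Your phrase ``positive up to sign'' concedes this, but the next sentence, ``$\sigma_i(s)>0$'', does not follow from it.
\end{itemize}

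The paper simply calls (i) obvious, tacitly taking $\omega_i$ to be the positive least energy solution mentioned in the introduction. You should do the same and fix $\omega_i>0$; no sign theorem is needed. Nothing later uses positivity. The construction of $\Gamma$ and Lemmas~\ref{lem:3.4} and~\ref{lem:3.5} only need $\gamma_i(\vec{s})\neq 0$, which comes from the choice of $\mu$, together with (iii) and (iv).
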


\begin{proof}
(i) is obvious.
By scaling, we have
\begin{equation*}
	\|\sigma_i(s)\|_{H^1}^2 
	= e^{(N-2)s}\|\nabla \omega_i\|_{L^2}^2 + e^{Ns}\|\omega_i\|_{L^2}^2.
\end{equation*}
Hence (ii) follows.
Similarly, by scaling, we obtain
\begin{align*}
	J_i(\sigma_i(s)) 
	& = \frac{\,1\,}{2}e^{(N-2)s}\|\nabla \omega_i\|_{L^2}^2 - e^{Ns}\int_{\R^N}F_i(\omega_i)\, dx, \\
	P_i(\sigma_i(s)) 
	& = \frac{N-2}{2}e^{(N-2)s}\|\nabla \omega_i\|_{L^2}^2 - Ne^{Ns}\int_{\R^N}F_i(\omega_i)\, dx.
\end{align*}
Since $\omega_i$ satisfies the Pohozaev identity $P_i(\omega_i)=0$, we have
\begin{equation*}
	\frac{d}{ds}J_i(\sigma_i(s)) 
	= P_i(\sigma_i(s)) 
	= \frac{N-2}{2}(1-e^{2s})e^{(N-2)s}\|\nabla \omega_i\|_{L^2}^2.
\end{equation*}
Thus (iv) holds and a function $s\mapsto J_i(\sigma_i(s))$ attains its maximum at $s=0$, which proves (iii).
\end{proof}

For $\vec{s}=(s_1,s_2,s_3)\in \R^3$, we define $\vec{\sigma}(\vec{s})=(\sigma_1(s_1), \sigma_2(s_2), \sigma_3(s_3))$.
Since $\vec{\sigma}(\vec{0})\in X$, we can choose $r>0$ and set $R=[-r,r]^3$ such that
\begin{equation*}
	\vec{\sigma}(\vec{s}) \in X_{\mu} \quad \text{ for all } \vec{s}\in R.
\end{equation*}
We define the constant $d_\alpha$ by 
\begin{equation*}
	d_\alpha = \max_{\vec{s}\in R}I_\alpha(\vec{\sigma}(\vec{s})).
\end{equation*}
Next, we define the minimax value $c_{\alpha}$ by
\begin{align*}
	& c_{\alpha}=\inf_{\vec{\gamma}\in \Gamma} \max_{\vec{s}\in R}I_\alpha(\vec{\gamma}(\vec{s})), \\
	& \Gamma = \left\{\vec{\gamma}\in C(R,\H)\,\left|\ \
	\begin{aligned}
	& \vec{\gamma}(\vec{s})=\vec{\sigma}(\vec{s}) \text{ for all } \vec{s}\in \partial R, \\
	& \vec{\gamma}(\vec{s})\in X_{2\mu} \text{ for all } \vec{s}\in R 
	\end{aligned} \right.\right\}.
\end{align*}
Clearly, $\vec{\sigma}\in \Gamma$ and $c_\alpha \leq d_\alpha$.
By the choice of $\mu$ and $r$, for any $\vec{\gamma}=(\gamma_1,\gamma_2,\gamma_3)\in \Gamma$, we have
\begin{equation*}
	\gamma_i(\vec{s}) \not=0 \quad \text{ for all } \vec{s}\in R \qquad (i=1,2,3).
\end{equation*}
Moreover, since $X_{2\mu}$ is bounded in $\H$, there exists a constant $D>0$ such that 
\begin{equation}\label{eq:3.4}
	\left|\int_{\R^N}\gamma_1(\vec{s})\gamma_2(\vec{s})\gamma_3(\vec{s})\,dx\right| \leq D \quad 
	\text{ for all $\vec{s}\in R$ and $\vec{\gamma}\in \Gamma$}.
\end{equation}
Since $\Gamma$ is not invariant under the gradient flow of $I_\alpha$, 
$c_{\alpha}$ is not necessarily a critical value. 
Nevertheless, it plays a key role in establishing the existence of a critical point in $X_{\mu}$.

The following property holds for $c_{\alpha}$.

\begin{lemma}\label{lem:3.4}
$\displaystyle \lim_{\alpha\to 0}c_{\alpha} = c_1 + c_2 + c_3$ 
and $\displaystyle \lim_{\alpha\to 0}d_{\alpha} = c_1 + c_2 + c_3$.
\end{lemma}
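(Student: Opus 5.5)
The plan is to establish an upper bound for $d_\alpha$ and a lower bound for $c_\alpha$. Since $c_\alpha\le d_\alpha$, it then suffices to show $\varlimsup_{\alpha\to0} d_\alpha\le c_1+c_2+c_3$ and $\varliminf_{\alpha\to0} c_\alpha\ge c_1+c_2+c_3$.

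\emph{Upper bound for $d_\alpha$.} For $\vec s\in R$, Lemma \ref{lem:3.3}(iii) gives
\begin{equation*}
	I_\alpha(\vec\sigma(\vec s))=\sum_{i=1}^3 J_i(\sigma_i(s_i))-\alpha\int_{\R^N}\sigma_1(s_1)\sigma_2(s_2)\sigma_3(s_3)\,dx\le c_1+c_2+c_3+|\alpha|D,
\end{equation*}
where we use \eqref{eq:3.4}, valid because $\vec\sigma\in\Gamma$. Hence $d_\alpha\le c_1+c_2+c_3+|\alpha|D$.

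\emph{Lower bound for $c_\alpha$.} For any $\vec\gamma\in\Gamma$, the bound \eqref{eq:3.4} gives
\begin{equation*}
	\max_{\vec s\in R} I_\alpha(\vec\gamma(\vec s))\ge \max_{\vec s\in R}\sum_{i=1}^3 J_i(\gamma_i(\vec s))-|\alpha|D.
\end{equation*}
So the key claim is the $\alpha$-independent estimate
\begin{equation*}
	\max_{\vec s\in R}\sum_{i=1}^3 J_i(\gamma_i(\vec s))\ge c_1+c_2+c_3 \qquad \text{for every } \vec\gamma\in\Gamma.
\end{equation*}
It yields $c_\alpha\ge c_1+c_2+c_3-|\alpha|D$, and together with the upper bound, both limits follow.

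\emph{Proof of the key claim (the main obstacle).} I will use a degree argument with the Pohozaev functionals. Define $\Phi:R\to\R^3$ by $\Phi(\vec s)=(P_1(\gamma_1(\vec s)),P_2(\gamma_2(\vec s)),P_3(\gamma_3(\vec s)))$; it is continuous. On $\partial R$ we have $\vec\gamma=\vec\sigma$, so by Lemma \ref{lem:3.3}(iv) the $i$-th component of $\Phi$ has the sign of $-s_i$, which is nonzero whenever $|s_i|=r$. Therefore $\Phi$ is homotopic on $\partial R$ to $\vec s\mapsto-\vec s$ without zeros, via the straight-line homotopy, which works because each component keeps its sign on the face where it is nonzero. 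Thus $\deg(\Phi,{\rm int}R,0)=(-1)^3\neq0$, and there exists $\vec s_0\in R$ with $P_i(\gamma_i(\vec s_0))=0$ for $i=1,2,3$.

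Moreover $\gamma_i(\vec s_0)\neq0$ by the choice of $\mu$. I then invoke the known fact from Berestycki--Lions, also used in \cite{BJ1} and \cite{CZ2}, that the least energy level equals the Pohozaev minimum:
\begin{equation*}
	c_i=\inf\{J_i(u)\mid u\in H^1_{\rm rad}(\R^N)\setminus\{0\},\ P_i(u)=0\}.
\end{equation*}
This gives $J_i(\gamma_i(\vec s_0))\ge c_i$, and summing over $i$ proves the claim.

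The step needing most care is justifying this Pohozaev characterization under only (f1)--(f3) without the limit condition. I would cite \cite{BL1} and \cite{HIT}, or prove it directly. For $u\ne0$ with $P_i(u)=0$, one has $\int_{\R^N}F_i(u)\,dx>0$, and $J_i(u)=\frac1N\|\nabla u\|_{L^2}^2$. Along the scaling $u(e^{-t}x)$, the value of $J_i$ is maximized at $t=0$. Since the mountain pass level of $J_i$ equals $c_i$ in this setting, this gives $J_i(u)\ge c_i$.
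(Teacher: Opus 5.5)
Your proposal is correct and follows essentially the same route as the paper. The upper bound comes from Lemma \ref{lem:3.3}(iii) together with \eqref{eq:3.4}. The lower bound comes from the degree of $\vec s\mapsto (P_i(\gamma_i(\vec s)))_{i=1,2,3}$ on $(-r,r)^3$, followed by the Pohozaev characterization $c_i=\inf_{P_i(u)=0,\,u\neq0}J_i(u)$ of Jeanjean--Tanaka. The only difference is that you homotope directly to $-\vec s$, while the paper compares with $\Psi$ built from $\vec\sigma$; both give degree $-1$.

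Your explicit remark that $\gamma_i(\vec s_0)\neq 0$ by the choice of $\mu$ is a point the paper leaves implicit, but the characterization needs it.
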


\begin{proof}
Let $\vec{s}_0\in R$ satisfy 
$\displaystyle d_\alpha=\max_{\vec{s}\in R}I_\alpha(\vec{\sigma}(\vec{s}))=I_\alpha(\vec{\sigma}(\vec{s}_0))$.
Then, by Lemma \ref{lem:3.3} (iii) and \eqref{eq:3.4}, we have
\begin{equation*}
	c_{\alpha}
	\leq d_\alpha = I_\alpha(\vec{\sigma}(\vec{s}_0))
	\leq \max_{\vec{s}\in R}\left(\sum_{i=1}^3J_i(\sigma_i(s_i))\right)  + |\alpha| D
	\leq c_1+c_2+c_3 + |\alpha| D.
\end{equation*}
Next, for $\vec{\gamma}=(\gamma_1,\gamma_2,\gamma_3)\in \Gamma$, we define two maps $\Phi, \Psi:R \to \R^3$ by
\begin{align*}
	\Phi(\vec{s}) & = (P_1(\gamma_1(\vec{s})),P_2(\gamma_2(\vec{s})), P_3(\gamma_3(\vec{s}))),\\
	\Psi(\vec{s}) & = (P_1(\sigma_1(s_1)),P_2(\sigma_2(s_2)), P_3(\sigma_3(s_3))).
\end{align*}
Since $\vec{\gamma}|_{\partial R}=\vec{\sigma}|_{\partial R}$, we have $\Phi|_{\partial R}=\Psi|_{\partial R}$.
By Lemma \ref{lem:3.3} (iv), we have
\begin{equation*}
	\Psi(\vec{s}) = \frac{N-2}{2}\left[\begin{aligned}
	& (1-e^{2s_1})e^{(N-2)s_1}\|\nabla \omega_1\|_{L^2}^2 \\
	& (1-e^{2s_2})e^{(N-2)s_2}\|\nabla \omega_2\|_{L^2}^2 \\
	& (1-e^{2s_3})e^{(N-2)s_3}\|\nabla \omega_3\|_{L^2}^2 
	\end{aligned}\right]^T.
\end{equation*}
A direct computation shows that $\deg(\Psi, (-r,r)^3, (0,0,0)) = -1$.
Hence
\begin{equation*}
	\deg(\Phi, (-r,r)^3, (0,0,0)) = \deg(\Psi,  (-r,r)^3, (0,0,0)) = -1.
\end{equation*}
Therefore, there exists $\vec{s}_0\in (-r,r)^3$ such that $\Phi(\vec{s}_0)=(0,0,0)$.
In particular, $P_i(\gamma_i(\vec{s}_0)) = 0$ for $i=1,2,3$.  
Recalling from \cite{JT} that $c_i$ is characterized by $\displaystyle c_i = \inf_{P_i(u)=0} J_i(u)$, we have
\begin{align*}
	\max_{\vec{s}\in R}I_\alpha(\vec{\gamma}(\vec{s}))
	& \geq I_\alpha(\vec{\gamma}(\vec{s}_0)) \\
	& \geq J_1(\gamma_1(\vec{s}_0)) + J_2(\gamma_2(\vec{s}_0)) + J_3(\gamma_3(\vec{s}_0)) - |\alpha| D \\
	& \geq c_1 + c_2 + c_3 - |\alpha| D.
\end{align*}
Since this holds for any $\vec{\gamma} \in \Gamma$, we obtain
\begin{equation*}
	c_{\alpha} \geq c_1 + c_2 + c_3 - |\alpha| D.
\end{equation*}
Consequently, it follows that $\displaystyle \lim_{\alpha \to 0}c_{\alpha} = \lim_{\alpha \to 0}d_{\alpha} = c_1+c_2+c_3$.
\end{proof}

\begin{lemma}\label{lem:3.5}
There exists $\alpha_2, \delta_2>0$ such that, for any $\alpha$ with $|\alpha|\leq \alpha_2$, 
\begin{equation}\label{eq:3.5}
	\max_{\vec{s} \in \partial R}I_\alpha(\vec{\sigma}(\vec{s})) \leq c_1+c_2+c_3 - \delta_2. 
\end{equation}
\end{lemma}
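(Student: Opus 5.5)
The plan is to combine the strict maximality in Lemma~\ref{lem:3.3}~(iii) with the uniform bound \eqref{eq:3.4} on the coupling term.

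First, for $\vec{s}\in\partial R$ at least one coordinate satisfies $|s_j|=r$. By Lemma~\ref{lem:3.3}~(iii), the function $s\mapsto J_j(\sigma_j(s))$ attains its maximum $c_j$ only at $s=0$. Since it is continuous, we get
\begin{equation*}
	\eta:=\min_{j=1,2,3}\bigl(c_j-\max\{J_j(\sigma_j(r)),J_j(\sigma_j(-r))\}\bigr)>0 .
\end{equation*}
The explicit scaling formula for $J_j(\sigma_j(s))$ in the proof of Lemma~\ref{lem:3.3} shows this is a finite, positive quantity. For the other indices $i\neq j$ we only use $J_i(\sigma_i(s_i))\le c_i$. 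Therefore
\begin{equation*}
	\sum_{i=1}^3 J_i(\sigma_i(s_i))\le c_1+c_2+c_3-\eta \quad\text{for all } \vec{s}\in\partial R .
\end{equation*}

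Second, $\vec{\sigma}\in\Gamma$, so \eqref{eq:3.4} gives $\bigl|\int_{\R^N}\sigma_1(s_1)\sigma_2(s_2)\sigma_3(s_3)\,dx\bigr|\le D$ for $\vec{s}\in R$. Alternatively, this follows directly from the boundedness of $\vec{\sigma}(R)$ in $\H$, Hölder's inequality with exponent $3$, and the embedding $H^1\subset L^3$, which is valid since $N\le 5<6$. Hence
\begin{equation*}
	I_\alpha(\vec{\sigma}(\vec{s}))\le c_1+c_2+c_3-\eta+|\alpha|D \quad\text{for all } \vec{s}\in\partial R .
\end{equation*}

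Finally, we choose $\delta_2=\eta/2$ and $\alpha_2=\eta/(2D)$, taking any $\alpha_2>0$ if $D=0$. This gives \eqref{eq:3.5}.

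There is no real obstacle in this argument. The only point needing care is that the gap $\eta$ is uniform over the whole boundary, and not just over its faces separately. This is handled by noting that every boundary point has some coordinate equal to $\pm r$, and that only finitely many values $J_j(\sigma_j(\pm r))$ are involved.
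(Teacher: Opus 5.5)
Your proof is correct and follows essentially the same route as the paper. Both arguments take a uniform gap for $\sum_i J_i(\sigma_i(s_i))$ on $\partial R$ from Lemma~\ref{lem:3.3}~(iii), then absorb the coupling term using \eqref{eq:3.4} for small $|\alpha|$; your explicit construction of $\eta$ from the finitely many values $J_j(\sigma_j(\pm r))$ simply spells out a step the paper leaves implicit.
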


\begin{proof}
By Lemma \ref{lem:3.3} (iii), there exists $\delta_2>0$ such that
\begin{equation*}
	\max_{\vec{s} \in \partial R } \sum_{i=1}^3 J_i(\sigma_i(s_i)) \leq c_1+c_2+c_3 - 2\delta_2.
\end{equation*}
By \eqref{eq:3.4}, we have 
\begin{equation*}
	I_\alpha(\vec{\sigma}(\vec{s})) \leq \sum_{i=1}^3 J_i(\sigma_i(s_i)) + |\alpha|D.
\end{equation*}
Hence there exists $\alpha_2>0$ such that, for any $\alpha$ with $|\alpha|\leq \alpha_2$, 
the inequality \eqref{eq:3.5} holds.
\end{proof}

\subsection{The existence and asymptotic behavior of critical points}

Let $\alpha_1 > 0$, $\delta_1 > 0$ and $\rho > 0$ be the constants given in Proposition \ref{prop:3.2}, 
and let  $\alpha_2 > 0$, $\delta_2 > 0$ be the constants in Lemma \ref{lem:3.5}.
Set $\delta_0 = \min\{\delta_1, \delta_2, \frac{\rho\mu}{2}\}$.  
Then, by Lemma~\ref{lem:3.4}, there exists $0<\alpha_0<\min\{\alpha_1,\alpha_2\}$ such that
\begin{equation*}
	c_1 + c_2 + c_3 - \delta_0 < c_{\alpha}\leq d_\alpha < c_1 + c_2 + c_3 + \delta_0 \quad \text{ for all } |\alpha| \leq \alpha_0.
\end{equation*}

In this situation, we have the following proposition.

\begin{proposition}\label{prop:3.6}
For every $\alpha$ with $|\alpha| \leq \alpha_0$, it holds that
\begin{equation*}
	\inf_{\vec{u}\in X_{\mu}\cap[I_\alpha\leq d_\alpha]}\|I_\alpha'(\vec{u})\|_{\H^*}=0.
\end{equation*}
\end{proposition}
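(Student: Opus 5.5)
We argue by contradiction, following the deformation argument of \cite{BJ1}. Suppose that for some $\alpha$ with $|\alpha|\leq\alpha_0$ there is $\varepsilon>0$ such that $\|I_\alpha'(\vec{u})\|_{\H^*}\geq\varepsilon$ for all $\vec{u}\in X_\mu\cap[I_\alpha\leq d_\alpha]$. Together with Proposition~\ref{prop:3.2}, we then have $\|I_\alpha'\|_{\H^*}\geq \rho':=\min\{\varepsilon,\rho\}$ on all of $X_{2\mu}\cap[I_\alpha\leq d_\alpha]$, since $d_\alpha<c_1+c_2+c_3+\delta_0\leq c_1+c_2+c_3+\delta_1$. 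The aim is to deform the path $\vec{\sigma}\in\Gamma$ into some $\vec{\gamma}\in\Gamma$ with $\max_R I_\alpha(\vec{\gamma})<c_\alpha$, which contradicts the definition of $c_\alpha$.

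\emph{Deformation.} Take a locally Lipschitz pseudo-gradient vector field $V$ for $I_\alpha$ on $\{I_\alpha'\neq0\}$ with $\|V\|\leq 1$ and $I_\alpha'(\vec{u})V(\vec{u})\geq \frac12\|I_\alpha'(\vec{u})\|_{\H^*}$. Multiply it by a Lipschitz cutoff $\chi$ that vanishes on $[I_\alpha\leq c_1+c_2+c_3-\delta_0]$ and outside $X_{2\mu}$, and equals $1$ on $X_{2\mu}\cap[I_\alpha\geq c_\alpha-\eta]$ for a small $\eta>0$. Let $\eta(t,\cdot)$ be the flow of $-\chi V$. Lemma~\ref{lem:3.5} and $\delta_0\leq\delta_2$ give $I_\alpha(\vec{\sigma}(\vec{s}))\leq c_1+c_2+c_3-\delta_0$ on $\partial R$, so the flow fixes $\vec{\sigma}|_{\partial R}$. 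Set $\vec{\gamma}(\vec{s})=\eta(T,\vec{\sigma}(\vec{s}))$. The energy is nonincreasing along the flow, and speed is at most $1$.

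\emph{Staying in $X_{2\mu}$.} This is the main obstacle. Start from $\vec{\sigma}(\vec{s})\in X_\mu$. If a trajectory leaves $X_{2\mu}$, it must cross the annulus $X_{2\mu}\setminus X_\mu$, travelling distance at least $\mu$ there, which takes time at least $\mu$. On that annulus the cutoff equals $1$ as long as the energy stays above $c_\alpha-\eta$, and $\|I_\alpha'\|\geq\rho$ there. So the energy drops by at least $\rho\mu/2\geq\delta_0$. Starting from at most $d_\alpha<c_1+c_2+c_3+\delta_0$, the energy would fall below $c_1+c_2+c_3$. Since $\delta_0\leq \rho\mu/2$ and $c_\alpha>c_1+c_2+c_3-\delta_0$, I will need to tune $\eta$ and the cutoff so that the trajectory instead enters the region where the flow stops, or ends below $c_\alpha-\eta$. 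In either case it never exits $X_{2\mu}$, and the cutoff also keeps the flow confined. The delicate point is choosing $\chi$ so that it vanishes before the boundary of $X_{2\mu}$ is reached; I would take $\chi$ supported in $X_{2\mu}$ and equal to $1$ on $X_{3\mu/2}$, and redo the drop estimate on $X_{3\mu/2}\setminus X_\mu$.

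\emph{Conclusion.} Within $X_\mu\cap[c_\alpha-\eta\leq I_\alpha\leq d_\alpha]$ the energy decreases at rate at least $\varepsilon/2$. Along the annulus the rate is at least $\rho/2$. Hence for $T$ large, of order $(d_\alpha-c_\alpha+\eta)/\min\{\varepsilon,\rho\}$, every trajectory ends with $I_\alpha<c_\alpha$, choosing $\eta$ small. Thus $\vec{\gamma}\in\Gamma$ and $\max_R I_\alpha(\vec{\gamma})<c_\alpha$, a contradiction. The infimum in Proposition~\ref{prop:3.6} is therefore $0$.
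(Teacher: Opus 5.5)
Your plan uses the same deformation scheme as the paper: contradiction, a pseudo-gradient flow started from $\vec\sigma$, $\partial R$ frozen by Lemma~\ref{lem:3.5}, and the annulus $X_{2\mu}\setminus X_\mu$ with Proposition~\ref{prop:3.2} to keep the path in $X_{2\mu}$. However, the step you flag as the main obstacle is left open, and the concrete remedy you propose makes it worse.

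\textbf{The gap.} A trajectory may have to lose up to $d_\alpha-c_\alpha+\eta$ in energy. All you know is $c_1+c_2+c_3-\delta_0<c_\alpha\le d_\alpha<c_1+c_2+c_3+\delta_0$, so this amount can be arbitrarily close to $2\delta_0$. Since only $\delta_0\le\rho\mu/2$ is available, $2\delta_0$ can equal $\rho\mu$.
\begin{itemize}
\item With your field ($\|V\|\le1$, $I_\alpha'(\vec u)V(\vec u)\ge\frac12\|I_\alpha'(\vec u)\|_{\H^*}$), crossing the annulus only guarantees an energy drop of $\rho\mu/2$.
\item Once $\chi\equiv1$ only on $X_{3\mu/2}$, crossing $X_{3\mu/2}\setminus X_\mu$ only guarantees $\rho\mu/4$.
\end{itemize}
So nothing in your estimates prevents a trajectory from entering the layer where $\chi<1$ with energy still above $c_\alpha$, and then stalling as $\chi\to0$. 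Your conclusion paragraph tacitly assumes $\chi\equiv1$ along every trajectory until its energy is below $c_\alpha-\eta$. That is exactly what is unproved, so $\max_R I_\alpha(\vec\gamma)<c_\alpha$ does not follow. Shrinking $\eta$ cannot help, since $\eta$ only enters additively.

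\textbf{The missing ingredient} is a crossing bound of essentially $\rho\mu$:
\begin{itemize}
\item Use a pseudo-gradient field with $\|V\|\le1$ and $I_\alpha'(\vec u)V(\vec u)\ge\theta\|I_\alpha'(\vec u)\|_{\H^*}$, where $\theta<1$ is close to $1$; such fields exist for every $\theta<1$.
\item Let $\chi\equiv1$ on $X_{2\mu-\kappa}$ (within the energy window) for small $\kappa>0$, and let $\chi$ vanish on $\{{\rm dist}(\cdot,X)\ge2\mu\}$.
\item For $\eta$ small, $d_\alpha-c_\alpha+\eta<2\delta_0\le\rho\mu$ holds strictly, so you can pick $\theta,\kappa$ with $\theta\rho(\mu-\kappa)>d_\alpha-c_\alpha+\eta$.
\end{itemize}
Then no trajectory can leave $X_{2\mu-\kappa}$ while its energy is still at least $c_\alpha-\eta$, and your time-$T$ argument goes through. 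The inclusion $\vec\gamma(R)\subset X_{2\mu}$ is automatic, because $\chi$ is locally Lipschitz and the points where it vanishes are stationary. Alternatively, shrinking $\delta_0$ to $\rho\mu/4$ (or $\rho\mu/8$ with your $X_{3\mu/2}$ cutoff) also closes the estimate. But that changes $\alpha_0$, so it proves less than the statement as fixed in the paper.

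\textbf{Comparison with the paper.} The paper uses no cutoff. It stops each trajectory of the normalized flow $-W/\|W\|$ at its first hitting time of the level $c_1+c_2+c_3-\delta_0$, and gets continuity of that time from the implicit function theorem. It closes the crossing step with a drop $\rho\mu\ge2\delta_0$, coming from the rate $\|I_\alpha'\|_{\H^*}$ in \eqref{eq:3.8}. Note that with only the stated bound $\|W\|\le2\|I_\alpha'\|_{\H^*}$, that rate is really $\frac12\|I_\alpha'\|_{\H^*}$. So the paper's step also tacitly needs the near-sharp pseudo-gradient above. Your time-$T$ map is a legitimate alternative to the hitting-time construction and gives continuity of $\vec\gamma$ for free.
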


\begin{proof}
We argue by contradiction.  
Suppose that Proposition \ref{prop:3.6} does not hold. 
Then there exists $\alpha$ with $|\alpha|\leq \alpha_0$ such that
\begin{equation}\label{eq:3.6}
	\inf_{\vec{u}\in X_{\mu}\cap[I_\alpha\leq d_\alpha]}\|I_\alpha'(\vec{u})\|_{\H^*}=:\rho_\alpha>0.
\end{equation}
We choose a pseudo-gradient vector field $W$ for $I_\alpha$.
It is a locally Lipschitz continuous mapping
$ W:\left\{\vec{u}\in \H \mid I_\alpha'(\vec{u})\neq 0\right\} \longrightarrow \H\setminus\{0\}$
and satisfies
\begin{equation*}
    \|W(\vec{u})\|_{\H}\leq 2\|I_\alpha'(\vec{u})\|_{\H^*},
    \qquad
    \langle I_\alpha'(\vec{u}), \, W(\vec{u}) \rangle_{\H^*, \H}
    \geq \|I_\alpha'(\vec{u})\|_{\H^*}^{2}.
\end{equation*}
Recall that $\vec{\sigma}(\vec{s})\in X_\mu$ for all $\vec{s}\in R$.
For $\vec{s}\in R$, we consider the following differential equation in $\H$ with initial value 
$\vec{\sigma}(\vec{s})$:
\begin{equation*}
	\left\{ \begin{aligned}
	& \frac{d\vec{\eta}}{dt}(t;\vec{\sigma}(\vec{s})) 
	= -\frac{W(\vec{\eta}(t;\vec{\sigma}(\vec{s})))}{\|W(\vec{\eta}(t;\vec{\sigma}(\vec{s})))\|_{\H}}, \\
	& \vec{\eta}(0;\vec{\sigma}(\vec{s})) = \vec{\sigma}(\vec{s}).
	\end{aligned}\right.
\end{equation*}
If the initial value satisfies $\vec{\sigma}(\vec{s})\in [I_\alpha>c_1+c_2+c_3-\delta_0]$,
then as long as the solution $\vec{\eta}(t;\vec{\sigma}(\vec{s}))$ remains in $X_{2\mu}\cap [I_\alpha>c_1+c_2+c_3-\delta_0]$, we have
\begin{equation}\label{eq:3.7}
	\|\vec{\eta}(t;\vec{\sigma}(\vec{s})) - \vec{\sigma}(\vec{s}) \|_{\H}
	\leq \int_0^t\left\|\frac{d}{d\tau}\vec{\eta}(\tau;\vec{\sigma}(\vec{s}))\right\|_{\H}\, d\tau
	= t
\end{equation}
and
\begin{align}
	I_\alpha(\vec{\eta}(t;\vec{\sigma}(\vec{s}))) - I_\alpha(\vec{\sigma}(\vec{s}))
	& = \int_0^t \frac{d}{d\tau}I_\alpha(\vec{\eta}(\tau;\vec{\sigma}(\vec{s})))\, d\tau \notag \\
	& = \int_0^t I_\alpha'(\vec{\eta}(\tau;\vec{\sigma}(\vec{s})))\frac{d\vec{\eta}}{d\tau}(\tau;\vec{\sigma}(\vec{s})) \, d\tau \notag \\
	& \leq -\int_0^t \|I_\alpha'(\vec{\eta}(\tau;\vec{\sigma}(\vec{s})))\|_{\H^*} \, d\tau. \label{eq:3.8}
\end{align}
It follows from \eqref{eq:3.6}, \eqref{eq:3.8}, and Proposition \ref{prop:3.2} that
$\vec{\eta}$ reaches the boundary $\partial (X_{2\mu}\cap [I_\alpha>c_1+c_2+c_3-\delta_0])$ in finite time.  
Let $t(\vec{s})$ denote the first such time.
If $\vec{\sigma}(\vec{s}) \notin [I_\alpha > c_1 + c_2 + c_3 - \delta_0]$, we set $t(\vec{s}) = 0$.  
We now claim the following.

\medskip

\noindent
\textbf{Claim.}  
If the initial value satisfies $\vec{\sigma}(\vec{s}) \in [I_\alpha > c_1 + c_2 + c_3 - \delta_0]$,  
then $\vec{\eta}$ reaches the level set $[I_\alpha = c_1 + c_2 + c_3 - \delta_0]$ before reaching $\partial X_{2\mu}$.

\medskip

Indeed, suppose instead that $\vec{\eta}$ stops upon reaching $\partial X_{2\mu}$.  
Then we can find times $t_1 < t_2$ such that 
$\vec{\eta}(t_1;\vec{\sigma}(\vec{s}))\in \partial X_{\mu}$ and $\vec{\eta}(t_2;\vec{\sigma}(\vec{s}))\in \partial X_{2\mu}$.
Arguing as in \eqref{eq:3.7}, we have
\begin{equation*}
	\mu\leq \|\vec{\eta}(t_2;\vec{\sigma}(\vec{s})) - \vec{\eta}(t_1;\vec{\sigma}(\vec{s})) \|_{\H}
	\leq t_2-t_1.
\end{equation*}
Moreover, it follows from \eqref{eq:3.8} that
\begin{equation*}
	I_\alpha(\vec{\eta}(t_2;\vec{\sigma}(\vec{s}))) - I_\alpha(\vec{\sigma}(\vec{s}))
	\leq -\int_{t_1}^{t_2} \|I_\alpha'(\vec{\eta}(\tau;\vec{\sigma}(\vec{s})))\|_{\H^*} \, d\tau 
	\leq -\rho(t_2-t_1)
	\leq -\rho\mu 
	\leq -2\delta_0,
\end{equation*}
where $\rho>0$ is the constant given in Proposition \ref{prop:3.2}.
Hence,
\begin{equation*}
	I_\alpha(\vec{\eta}(t_2;\vec{\sigma}(\vec{s}))) \le c_1 + c_2 + c_3 - \delta_0,
\end{equation*}
which means that $\vec{\eta}$ must have reached the level set $[I_\alpha = c_1 + c_2 + c_3 - \delta_0]$ earlier.
This is a contradiction, proving the Claim.

\medskip

From the above Claim, we obtain
\begin{equation}\label{eq:3.9}
	\max_{\vec{s}\in R}I_\alpha(\vec{\eta}(t(\vec{s});\vec{\sigma}(\vec{s})))\leq c_1+c_2+c_3-\delta_0.
\end{equation}
Let $\vec{\gamma}(\vec{s})=\vec{\eta}(t(\vec{s});\vec{\sigma}(\vec{s}))$.
Applying the implicit function theorem to 
\begin{equation*}
	f(t,\vec{s}) = I_\alpha(\vec{\eta}(t,\vec{\sigma}(\vec{s}))) = c_1 + c_2 + c_3 - \delta_0,
\end{equation*}
we conclude that $t(\vec{s})$ is continuous.
Hence we have $\vec{\gamma}\in C(R,\H)$.
For $\vec{s}\in\partial R$, we have $\vec{\sigma}(\vec{s})\in [I_\alpha \le c_1 + c_2 + c_3 - \delta_0]$ by \eqref{eq:3.5}.
Hence $\vec{\gamma}(\vec{s}) = \vec{\sigma}(\vec{s})$ for all $\vec{s}\in\partial R$.
Moreover, the above Claim implies $\vec{\gamma}(\vec{s}) = \vec{\eta}(t(\vec{s});\vec{\sigma}(\vec{s})) \in X_{2\mu}$ for all $\vec{s}\in R$.
Consequently, we have $\vec{\gamma}\in \Gamma$.
Combining this with \eqref{eq:3.9}, we reach a contradiction:
\begin{equation*}
	c_{\alpha}\leq \max_{\vec{s}\in R}I_\alpha(\vec{\gamma}(\vec{s}))\leq c_1+c_2+c_3-\delta_0 < c_{\alpha}.
\end{equation*}
This proves Proposition \ref{prop:3.6}.
\end{proof}

\begin{proof}[Proof of Theorem \ref{thm:1.1}.]
Fix any $\alpha$ with $|\alpha| \leq \alpha_0$. 
By Proposition \ref{prop:3.6}, there exists a sequence $\{\vec{u}_n\}$ satisfying
\begin{equation*}
	\vec{u}_n \in X_{\mu}\cap[I_\alpha\leq d_\alpha] \quad \text{ and } \quad 
	\|I_{\alpha}'(\vec{u}_n)\|_{\H^*}\to 0.
\end{equation*}
By Proposition \ref{prop:2.2}, $\{\vec{u}_n\}$ has a subsequence converging strongly in $\H$ 
to some $\vec{u}_\alpha \in X_\mu \cap [I_\alpha\leq d_\alpha]$. 
In particular, $\vec{u}_\alpha$ is a vector solution of \eqref{eq:1.1}.
For the family $\{\vec{u}_\alpha\}$, Lemma \ref{lem:3.1} implies that $\displaystyle \lim_{\alpha \to 0}{\rm dist}(\vec{u}_\alpha,X)=0$.
\end{proof}

\section{Proof of Theorems \ref{thm:1.2} and \ref{thm:1.3}}

In this section, we prove Theorems \ref{thm:1.2} and \ref{thm:1.3}.

\subsection{Proof of Theorem \ref{thm:1.2}}
The proof of Theorem \ref{thm:1.2} is almost identical to that of Theorem \ref{thm:1.1}, so we only outline the main steps here.  
We choose a constant $\tilde{\mu}$ satisfying
\begin{equation*}
	0<\tilde{\mu}<\frac{\,1\,}{3}\inf\left\{\|\omega_i\|_{H^1}\,|\, \omega_i\in S_i \ (i=1,2) \right\} \ \text{ and } \
	J_3(u_3) \geq 0 \text{ for all } \|u_3\|_{H^1}\leq 2\tilde{\mu}.
\end{equation*}
We recall that $Y=S_1 \times S_2 \times \{0\}$ and define the $\tilde{\mu}$-neighborhood of $Y$ by
\begin{equation*}
	Y_{\tilde{\mu}}=\{\vec{u} \in \H\,|\, {\rm dist}(\vec{u}, Y)\leq \tilde{\mu}\}.
\end{equation*}
By this choice of $\tilde{\mu}$, the first and second components of any element in $Y_{2\tilde{\mu}}$ are nonzero.  

\begin{lemma}\label{lem:4.1}
Let $\{\alpha_n\}$ and $\{\delta_n\}$ be sequences with $\alpha_n \to 0$ and $\delta_n \to 0^+$.
Suppose that a sequence $\{\vec{u}_n\}$ satisfies
\begin{equation}\label{eq:4.1}
	\vec{u}_n \in Y_{2\tilde{\mu}}\cap [I_{\alpha_n}\leq c_1+c_2+\delta_n] \quad \text{ and } \quad 
	\|I_{\alpha_n}'(\vec{u}_n)\|_{\H^*}\to 0 \text{ as } n\to \infty. 
\end{equation}
Then 
\begin{equation}\label{eq:4.2}
	\lim_{n\to \infty}{\rm dist}(\vec{u}_n, Y)=0.
\end{equation}
\end{lemma}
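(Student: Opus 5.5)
We argue by contradiction, exactly as in Lemma~\ref{lem:3.1}. Suppose \eqref{eq:4.2} fails. Then, after passing to a subsequence, we may assume $\lim_{n\to\infty}{\rm dist}(\vec{u}_n,Y)>0$. The set $Y_{2\tilde{\mu}}$ is bounded in $\H$, since $S_1$ and $S_2$ are compact by Lemma~\ref{lem:2.4}. So Proposition~\ref{prop:2.3} applies and gives a further subsequence with $\vec{u}_n\to\vec{u}_0=(u_{10},u_{20},u_{30})$ strongly in $\H$. The limit satisfies $\vec{u}_0\in Y_{2\tilde{\mu}}$ and $I_0'(\vec{u}_0)=0$. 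Because $I_0$ is decoupled, each $u_{i0}$ is a critical point of $J_i$.

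Next we identify the components. By the choice of $\tilde{\mu}$, $u_{10}$ and $u_{20}$ are nonzero, so $J_1(u_{10})\ge c_1$ and $J_2(u_{20})\ge c_2$. For the third component, ${\rm dist}(\vec{u}_0,Y)\le 2\tilde{\mu}$ gives $\|u_{30}\|_{H^1}\le 2\tilde{\mu}$, and the choice of $\tilde{\mu}$ then gives $J_3(u_{30})\ge 0$. Strong convergence, together with boundedness of $\int u_{1n}u_{2n}u_{3n}$ and $\alpha_n\to0$, yields
\begin{equation*}
c_1+c_2\ \ge\ \lim_{n\to\infty}I_{\alpha_n}(\vec{u}_n)=J_1(u_{10})+J_2(u_{20})+J_3(u_{30})\ \ge\ c_1+c_2 .
\end{equation*}
All these inequalities are therefore equalities: $J_1(u_{10})=c_1$, $J_2(u_{20})=c_2$ and $J_3(u_{30})=0$. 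Hence $u_{10}\in S_1$ and $u_{20}\in S_2$, using that they are radial and lie in $H^1_{\rm rad}$.

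The main obstacle, and the only point not already in Lemma~\ref{lem:3.1}, is to show $u_{30}=0$. Knowing $J_3(u_{30})=0$ alone does not suffice. We use instead that $u_{30}$ is a critical point of $J_3$ with $\|u_{30}\|_{H^1}\le 2\tilde{\mu}$. Testing $J_3'(u_{30})u_{30}=0$ against the splitting $g_3=h_3^+-h_3^-$ gives
\begin{equation*}
\|\nabla u_{30}\|_{L^2}^2+\lambda_3\|u_{30}\|_{L^2}^2\le\int h_3^+(u_{30})u_{30}\,dx\le C\|u_{30}\|_{L^{2^*}}^{2^*}\le C'\|u_{30}\|_{H^1}^{2^*},
\end{equation*}
exactly as in the proof of Lemma~\ref{lem:2.4}. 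Thus either $u_{30}=0$ or $\|u_{30}\|_{H^1}\ge\kappa$ for some $\kappa>0$ depending only on $f_3$.

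If $2\tilde{\mu}<\kappa$ is already built into the choice of $\tilde{\mu}$, this forces $u_{30}=0$. Otherwise we would argue through the Pohozaev identity. Any nontrivial critical point of $J_3$ satisfies $J_3(u_{30})=\frac1N\|\nabla u_{30}\|_{L^2}^2>0$, which contradicts $J_3(u_{30})=0$ and again gives $u_{30}=0$. This second route does not require (f3) for $f_3$, and it is the one we would actually use. It relies only on the Pohozaev identity for critical points of $J_3$ under (f1)--(f2), as used in Lemma~\ref{lem:2.4}.

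Therefore $\vec{u}_0\in S_1\times S_2\times\{0\}=Y$. Since $\vec{u}_n\to\vec{u}_0$ strongly, this contradicts $\lim_{n\to\infty}{\rm dist}(\vec{u}_n,Y)>0$, and \eqref{eq:4.2} follows.
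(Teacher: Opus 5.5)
Your proof is correct and follows the paper's argument. You argue by contradiction, get strong convergence from Proposition~\ref{prop:2.3}, and then use the energy squeeze $c_1+c_2\ge\sum_i J_i(u_{i0})\ge c_1+c_2$, with $J_3\ge 0$ on the $2\tilde{\mu}$-ball. Your one addition is the Pohozaev step showing that a critical point $u_{30}$ of $J_3$ with $J_3(u_{30})=0$ must vanish; the paper uses this fact implicitly when it passes from $J_3(u_{30})=0$ to $\vec{u}_0\in Y$.
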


\begin{proof}
Suppose that \eqref{eq:4.2} does not hold. 
Then, up to a subsequence, we have
\begin{equation}\label{eq:4.3}
	\lim_{n\to \infty}{\rm dist}(\vec{u}_n, Y)>0.
\end{equation}
Since $Y_{2\tilde{\mu}}$ is bounded, Proposition \ref{prop:2.3} implies that, up to a subsequence,
$\vec{u}_n \to \vec{u}_0$ strongly in $\H$ for some $\vec{u}_0=(u_{10},u_{20},u_{30})$. 
Then $\vec{u}_0\in Y_{2\tilde{\mu}}$ and $I_0'(\vec{u}_0)=0$.
These mean that $u_{10}$ and $u_{20}$ are nontrivial critical points of $J_1$ and $J_2$ respectively, 
and $u_{30}$ is a critical point of $J_3$.
Therefore
\begin{equation*}
	\lim_{n\to \infty}I_{\alpha_n}(\vec{u}_n) = \sum_{i=1}^3J_i(u_{i0})\geq c_1+c_2.
\end{equation*}
Since ${\displaystyle \varlimsup_{n\to \infty}}I_{\alpha_n}(\vec{u}_n)\leq c_1+c_2$, we deduce that 
$J_i(u_{i0})=c_i$ for $i=1,2$ and $J_3(u_{30})=0$.
Thus, $\vec{u}_0\in Y$, which contradicts \eqref{eq:4.3}.
Hence, \eqref{eq:4.2} holds.
\end{proof}

\begin{proposition}\label{prop:4.2}
There exist constants $\tilde{\alpha}_1>0$, $\tilde{\delta}_1>0$ and $\tilde{\rho}>0$ such that, for all $\alpha$ with $|\alpha|\leq \tilde{\alpha}_1$, we have
\begin{equation*}
	\|I_\alpha'(\vec{u})\|_{\H^*}\geq \tilde{\rho} \quad \text{ for all } \vec{u}\in (Y_{2\tilde{\mu}}\setminus Y_{\tilde{\mu}})\cap[I_\alpha\leq c_1+c_2+\tilde{\delta}_1]
\end{equation*}
\end{proposition}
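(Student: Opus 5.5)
We argue by contradiction, exactly as in the proof of Proposition~\ref{prop:3.2}, with Lemma~\ref{lem:4.1} taking the place of Lemma~\ref{lem:3.1}. Suppose Proposition~\ref{prop:4.2} fails. Then for every choice $\tilde{\alpha}_1=\tilde{\delta}_1=\tilde{\rho}=1/n$ the conclusion breaks down for some $\alpha_n$. This produces sequences $\alpha_n\to 0$ and $\delta_n\to 0^+$, together with points
\begin{equation*}
	\vec{u}_n\in (Y_{2\tilde{\mu}}\setminus Y_{\tilde{\mu}})\cap [I_{\alpha_n}\leq c_1+c_2+\delta_n]
	\quad\text{with}\quad \|I_{\alpha_n}'(\vec{u}_n)\|_{\H^*}<1/n\to 0 .
\end{equation*}

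Since $Y_{2\tilde{\mu}\setminus}$ is in particular contained in $Y_{2\tilde{\mu}}$, the sequence $\{\vec{u}_n\}$ satisfies the hypothesis \eqref{eq:4.1} of Lemma~\ref{lem:4.1}. That lemma then gives $\lim_{n\to\infty}{\rm dist}(\vec{u}_n,Y)=0$. On the other hand, $\vec{u}_n\notin Y_{\tilde{\mu}}$ means ${\rm dist}(\vec{u}_n,Y)>\tilde{\mu}$ for every $n$. These two facts are incompatible, so the proposition holds.

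The only point needing care is hidden inside Lemma~\ref{lem:4.1}, which we take as given. There, the limit $u_{30}$ is a critical point of $J_3$ with $\|u_{30}\|_{H^1}\leq 2\tilde{\mu}$, so $J_3(u_{30})\geq 0$ by the choice of $\tilde{\mu}$. The energy bound then forces $J_3(u_{30})=0$. Since $u_{30}$ is a critical point, the Pohozaev identity gives $J_3(u_{30})=\frac1N\|\nabla u_{30}\|_{L^2}^2$, hence $u_{30}=0$. This identification of $u_{30}$ is exactly what places $\vec{u}_0$ in $Y$. Beyond this, the argument is a routine compactness contradiction, and I expect no real obstacle.
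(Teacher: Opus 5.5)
Your argument is correct and is essentially the paper's proof: you take a contradiction sequence $\vec{u}_n\in (Y_{2\tilde{\mu}}\setminus Y_{\tilde{\mu}})\cap[I_{\alpha_n}\leq c_1+c_2+\delta_n]$ with $\|I_{\alpha_n}'(\vec{u}_n)\|_{\H^*}\to 0$, apply Lemma~\ref{lem:4.1} to get ${\rm dist}(\vec{u}_n,Y)\to 0$, and contradict ${\rm dist}(\vec{u}_n,Y)>\tilde{\mu}$. Your remark that $J_3(u_{30})=0$ forces $u_{30}=0$ via the Pohozaev identity spells out a step the paper leaves implicit in Lemma~\ref{lem:4.1}. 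The expression $Y_{2\tilde{\mu}\setminus}$ is a typo for $Y_{2\tilde{\mu}}\setminus Y_{\tilde{\mu}}$.
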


\begin{proof}
We argue by contradiction.
Suppose the statement is false.
Then there exist sequences $\{\alpha_n\}$ and $\{\delta_n\}$ with $\alpha_n\to 0$ and $\delta_n\to 0^+$, and a sequence $\{\vec{u}_n\}$ such that 
\begin{equation*}
	\vec{u}_n \in (Y_{2\tilde{\mu}}\setminus Y_{\tilde{\mu}})\cap [I_{\alpha_n} \leq c_1+c_2+\delta_n] \quad \text{ and } \quad 
	\|I_{\alpha_n}'(\vec{u}_n)\|_{\H^*}\to 0.
\end{equation*}
By Lemma \ref{lem:4.1}, we obtain $\displaystyle \lim_{n\to \infty}{\rm dist}(\vec{u}_n, Y)=0$,
which contradicts $\vec{u}_n\notin Y_{\tilde{\mu}}$.
This contradiction completes the proof.
\end{proof}

For $\vec{r}=(r_1,r_2)\in \R^2$, we define $\vec{\tau}(\vec{r})=(\sigma_1(r_1), \sigma_2(r_2), 0)$.
Since $\vec{\tau}(\vec{0})\in Y$, we can choose $l>0$ and set $L=[-l,l]^2$ such that
\begin{equation*}
	\vec{\tau}(\vec{r})  \in Y_{\tilde{\mu}} \quad \text{ for all } \vec{r}\in L.
\end{equation*}
We define the minimax value $b_{\alpha}$ by
\begin{align*}
	& b_{\alpha}=\inf_{\vec{\gamma}\in \widetilde{\Gamma}}\max_{\vec{r}\in L}I_\alpha(\vec{\gamma}(\vec{r})), \\
	& \widetilde{\Gamma} = \left\{\vec{\gamma}\in C(L,\H)\,\left|\ \
	\begin{aligned}
	& \vec{\gamma}(\vec{r})=\vec{\tau}(\vec{r}) \text{ for all } \vec{r}\in \partial L, \\
	& \vec{\gamma}(\vec{r})\in Y_{2\tilde{\mu}} \text{ for all } \vec{r}\in L 
	\end{aligned} \right.\right\}.
\end{align*}
Clearly, $\vec{\tau}=(\sigma_1, \sigma_2, 0)\in \widetilde{\Gamma}$ and
\begin{equation}\label{eq:4.4}
	I_{\alpha}(\vec{\tau}(\vec{r})) = J_1(\sigma_1(r_1)) + J_2(\sigma_2(r_2)) \quad \text{ for all } \vec{r}\in L.
\end{equation}
By the choice of $\tilde{\mu}$ and $l$, for any $\vec{\gamma}=(\gamma_1,\gamma_2,\gamma_3)\in \widetilde{\Gamma}$, we have
\begin{equation*}
	\gamma_i(\vec{r}) \not=0 \quad \text{ for all } \vec{r}\in L \qquad (i=1,2).
\end{equation*}
The following properties hold for $b_{\alpha}$.

\begin{lemma}\label{lem:4.3}
$b_{\alpha} \leq c_1 + c_2$ and $\displaystyle \lim_{\alpha\to 0}b_{\alpha} = c_1 + c_2$.
\end{lemma}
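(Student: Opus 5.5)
The upper bound is immediate from the test path $\vec{\tau}\in\widetilde{\Gamma}$. By \eqref{eq:4.4} the third component of $\vec{\tau}$ vanishes, so the cubic coupling term is zero and $I_\alpha(\vec{\tau}(\vec{r}))=J_1(\sigma_1(r_1))+J_2(\sigma_2(r_2))$. By Lemma~\ref{lem:3.3}~(iii) this is at most $c_1+c_2$, with equality at $\vec{r}=\vec{0}$. Hence $b_\alpha\le\max_{\vec{r}\in L}I_\alpha(\vec{\tau}(\vec{r}))=c_1+c_2$ for every $\alpha$. Note that, unlike Lemma~\ref{lem:3.4}, no $|\alpha|D$ error appears here.

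For the lower bound I mimic the degree argument of Lemma~\ref{lem:3.4} in two dimensions. Fix $\vec{\gamma}=(\gamma_1,\gamma_2,\gamma_3)\in\widetilde{\Gamma}$ and define
\[
\Phi(\vec{r})=(P_1(\gamma_1(\vec{r})),P_2(\gamma_2(\vec{r}))), \qquad \Psi(\vec{r})=(P_1(\sigma_1(r_1)),P_2(\sigma_2(r_2))).
\]
These coincide on $\partial L$. By Lemma~\ref{lem:3.3}~(iv), $\Psi$ has components $\frac{N-2}{2}(1-e^{2r_i})e^{(N-2)r_i}\|\nabla\omega_i\|_{L^2}^2$, which vanish only at $\vec{0}$ and are strictly decreasing in $r_i$ near $0$. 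Therefore $\deg(\Psi,(-l,l)^2,0)=1$, and by boundary invariance $\deg(\Phi,(-l,l)^2,0)=1$. This gives $\vec{r}_0$ with $P_i(\gamma_i(\vec{r}_0))=0$ for $i=1,2$. Since $\gamma_i(\vec{r}_0)\ne0$, the Pohozaev characterization $c_i=\inf_{P_i(u)=0,\,u\ne0}J_i(u)$ from \cite{JT} yields $J_i(\gamma_i(\vec{r}_0))\ge c_i$ for $i=1,2$.

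Two terms remain. The third component lies in the ball $\|\gamma_3(\vec{r}_0)\|_{H^1}\le2\tilde{\mu}$ because $\vec{\gamma}(\vec{r}_0)\in Y_{2\tilde{\mu}}$, so $J_3(\gamma_3(\vec{r}_0))\ge0$ by the choice of $\tilde{\mu}$. The coupling term is controlled by boundedness: $Y_{2\tilde{\mu}}$ is bounded in $\H$ (because $S_1,S_2$ are compact by Lemma~\ref{lem:2.4}), so Sobolev/H\"older gives a constant $\tilde D$ with $|\int u_1u_2u_3\,dx|\le\tilde D$ on $Y_{2\tilde{\mu}}$; here $3\le N\le5$ ensures $3\le 2^*$. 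Hence
\[
\max_{L}I_\alpha(\vec{\gamma})\ge I_\alpha(\vec{\gamma}(\vec{r}_0))\ge c_1+c_2-|\alpha|\tilde D .
\]
Taking the infimum over $\widetilde{\Gamma}$ gives $c_1+c_2-|\alpha|\tilde D\le b_\alpha\le c_1+c_2$, and the limit follows.

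The only delicate point is the step where $J_3\ge0$ is used. There $\gamma_3$ is not controlled by any Pohozaev constraint, and $f_3$ need not satisfy (f3). It is exactly the smallness condition built into $\tilde{\mu}$ that handles this; it is available because (f1)--(f2) make $J_3$ positive near $0$. The degree computation itself is routine, as in Lemma~\ref{lem:3.4}.
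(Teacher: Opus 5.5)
Your proof is correct and follows the paper's argument. The test path $\vec{\tau}$ gives the upper bound, and the lower bound $b_\alpha\ge c_1+c_2-|\alpha|\widetilde{D}$ comes from the two-dimensional degree argument, the Pohozaev characterization, the bound $J_3\ge 0$ on the $2\tilde{\mu}$-ball, and a uniform bound on the cubic term. Your value $\deg=+1$ is the correct one, since the Jacobian of your $\Psi$ at $\vec{0}$ is diagonal with two negative entries; the paper's $-1$ is a slip carried over from the three-dimensional case, and it is harmless because only $\deg\neq 0$ is used. You also make explicit two steps the paper uses tacitly: the restriction $u\neq 0$ in the Jeanjean--Tanaka characterization, and $J_3(\gamma_3(\vec{r}_0))\ge 0$.
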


\begin{proof}
From \eqref{eq:4.4}, it follows that 
\begin{equation*}
	b_{\alpha} \leq \max_{\vec{r}\in L}I_{\alpha}(\vec{\tau}(\vec{r})) \leq c_1+c_2.
\end{equation*}
For any $\vec{\gamma} = (\gamma_1, \gamma_2, \gamma_3) \in \widetilde{\Gamma}$, we define the two maps
\begin{align*}
	\tilde{\Phi}(\vec{r}) & = (P_1(\gamma_1(\vec{r})), P_2(\gamma_2(\vec{r}))): L \to \R^2,\\
	\tilde{\Psi}(\vec{r}) & = (P_1(\sigma_1(r_1)), P_2(\sigma_2(r_2))): L \to \R^2.
\end{align*}
Then, by the same argument as in the proof of Lemma \ref{lem:3.4},  
we have 
\begin{equation*}
	\deg(\tilde{\Phi}, (-l,l)^2, (0,0)) = \deg(\tilde{\Psi}, (-l,l)^2, (0,0)) = -1.
\end{equation*}
Hence there exists $\vec{r}_0 \in (-l,l)^2$ such that $P_i(\gamma_i(\vec{r}_0)) = 0$ for $i=1,2$.  
By $\displaystyle c_i = \inf_{P_i(u)=0} J_i(u)$ and \eqref{eq:3.4}, we have
\begin{align*}
	\max_{\vec{r}\in L}I_\alpha(\vec{\gamma}(\vec{r}))
	& \geq I_\alpha(\vec{\gamma}(\vec{r}_0)) \\
	& \geq J_1(\gamma_1(\vec{r}_0)) + J_2(\gamma_2(\vec{r}_0)) + J_3(\gamma_3(\vec{r}_0)) - |\alpha|\widetilde{D} \\
	& \geq c_1 + c_2 - |\alpha| \widetilde{D},
\end{align*}
for a constant $\widetilde{D}>0$.
Since this holds for any $\vec{\gamma} \in \widetilde{\Gamma}$, we obtain
\begin{equation*}
	b_{\alpha} \geq c_1 + c_2  - |\alpha| \widetilde{D}.
\end{equation*}
Consequently, it follows that $b_{\alpha} \to c_1+c_2$ $(\alpha \to 0)$.
\end{proof}

\begin{lemma}\label{lem:4.4}
There exists $\tilde{\delta}_2>0$ such that, for any $\alpha\in \R$ such that
\begin{equation}\label{eq:4.5}
	\max_{\vec{r} \in \partial L} I_\alpha(\vec{\tau}(\vec{r})) \leq c_1+c_2 - \tilde{\delta}_2. 
\end{equation}
\end{lemma}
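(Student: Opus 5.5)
The plan is to exploit \eqref{eq:4.4}: since the third component of $\vec{\tau}(\vec{r})$ is identically zero, the cubic interaction term $\alpha\int_{\R^N}u_1u_2u_3\,dx$ vanishes along $\vec{\tau}$. Thus $I_\alpha(\vec{\tau}(\vec{r}))=J_1(\sigma_1(r_1))+J_2(\sigma_2(r_2))$ is independent of $\alpha$, and this is why the bound can hold for every $\alpha\in\R$ rather than only for small $|\alpha|$ as in Lemma \ref{lem:3.5}. It therefore suffices to find $\tilde{\delta}_2>0$ with
\begin{equation*}
	\max_{\vec{r}\in\partial L}\bigl(J_1(\sigma_1(r_1))+J_2(\sigma_2(r_2))\bigr)\leq c_1+c_2-\tilde{\delta}_2 .
\end{equation*}

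I will use the explicit formula from the proof of Lemma \ref{lem:3.3}. The function $s\mapsto J_i(\sigma_i(s))$ is continuous on $[-l,l]$ and attains its strict maximum $c_i$ only at $s=0$, by Lemma \ref{lem:3.3} (iii). Set
\begin{equation*}
	\epsilon_i=c_i-\max\{J_i(\sigma_i(l)),J_i(\sigma_i(-l))\}>0 \qquad (i=1,2).
\end{equation*}
Since the derivative $P_i(\sigma_i(s))$ is positive for $s<0$ and negative for $s>0$, the map $s\mapsto J_i(\sigma_i(s))$ is increasing on $[-l,0]$ and decreasing on $[0,l]$. Hence $J_i(\sigma_i(\pm l))\leq c_i-\epsilon_i$.

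A point $\vec{r}\in\partial L$ has $|r_1|=l$ or $|r_2|=l$. If $|r_1|=l$, then $J_1(\sigma_1(r_1))\leq c_1-\epsilon_1$ and $J_2(\sigma_2(r_2))\leq c_2$, so the sum is at most $c_1+c_2-\epsilon_1$. The case $|r_2|=l$ is symmetric and gives the bound $c_1+c_2-\epsilon_2$. Taking $\tilde{\delta}_2=\min\{\epsilon_1,\epsilon_2\}$ gives \eqref{eq:4.5} for all $\alpha\in\R$.

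There is no real obstacle here. The only point needing care is that the strict positivity of $\epsilon_i$ relies on $\omega_i\in S_i$ being fixed and on the unimodality established in Lemma \ref{lem:3.3} (iii)--(iv). Alternatively, compactness of $\partial L$ together with the continuity of $\vec{r}\mapsto J_1(\sigma_1(r_1))+J_2(\sigma_2(r_2))$ gives a maximum strictly below $c_1+c_2$ directly, because equality at a point would force $r_1=r_2=0$, which is not in $\partial L$.
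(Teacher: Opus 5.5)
Your proposal is correct and follows the paper's own argument: use \eqref{eq:4.4} to reduce to the $\alpha$-independent sum $J_1(\sigma_1(r_1))+J_2(\sigma_2(r_2))$, then use Lemma~\ref{lem:3.3}~(iii) to get a uniform gap below $c_1+c_2$ on $\partial L$. The monotonicity from (iv) is not needed, since your $\epsilon_i$ already bounds $J_i(\sigma_i(\pm l))$ by definition.
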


\begin{proof}
By Lemma \ref{lem:3.3} (iii), there exists $\tilde{\delta}_2 > 0$ such that
\begin{equation*}
	\max_{\vec{r} \in \partial L } \sum_{i=1}^2 J_i(\sigma_i(s_i)) \leq c_1 + c_2 - 2\tilde{\delta}_2.
\end{equation*}
Hence \eqref{eq:4.5} follows from \eqref{eq:4.4}.
\end{proof}

Let $\tilde{\alpha}_1>0$ and $\tilde{\rho}>0$ be the constants given in Proposition~\ref{prop:4.2}, 
and let $\tilde{\delta}_2>0$ be the constant in Lemma~\ref{lem:4.4}.  
Set $\tilde{\delta}_0 = \min\{\tilde{\delta}_1,\tilde{\delta}_2, \tilde{\rho}\tilde{\mu}\}$. 
Then, by Lemma~\ref{lem:4.3}, there exists $\tilde{\alpha}_0 \in (0, \tilde{\alpha}_1]$ such that
\begin{equation*}
	c_1 + c_2 - \tilde{\delta}_0 < b_{\alpha} \leq c_1 + c_2 \quad \text{ for all } |\alpha| \leq \tilde{\alpha}_0.
\end{equation*}
We now have the following result.

\begin{proposition}\label{prop:4.5}
For every $\alpha$ with $|\alpha| \leq \tilde{\alpha}_0$, it holds that
\begin{equation*}
	\inf_{\vec{u}\in Y_{\tilde{\mu}}\cap[I_\alpha\leq c_1+c_2]}\|I_\alpha'(\vec{u})\|_{\H^*}=0.
\end{equation*}
\end{proposition}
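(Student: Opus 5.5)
We argue by contradiction, exactly as in the proof of Proposition \ref{prop:3.6}, with $X_\mu$, $\Gamma$, $c_\alpha$, $d_\alpha$ replaced by $Y_{\tilde\mu}$, $\widetilde\Gamma$, $b_\alpha$ and the level $c_1+c_2$. Note that, by \eqref{eq:4.4} and Lemma \ref{lem:3.3} (iii), $\max_{\vec r\in L}I_\alpha(\vec\tau(\vec r))\le c_1+c_2$. This is why the level $c_1+c_2$ plays the role of $d_\alpha$. Suppose that for some $|\alpha|\le\tilde\alpha_0$ we have
\begin{equation*}
\inf_{\vec u\in Y_{\tilde\mu}\cap[I_\alpha\le c_1+c_2]}\|I_\alpha'(\vec u)\|_{\H^*}=:\tilde\rho_\alpha>0 .
\end{equation*}
Take a pseudo-gradient field $W$ for $I_\alpha$. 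For each $\vec r\in L$ with $I_\alpha(\vec\tau(\vec r))>c_1+c_2-\tilde\delta_0$, run the normalized flow $\vec\eta(t;\vec\tau(\vec r))$ with $\dot{\vec\eta}=-W/\|W\|_{\H}$ starting at $\vec\tau(\vec r)\in Y_{\tilde\mu}$.

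Along the flow, $I_\alpha$ is nonincreasing and the flow moves at unit speed. As long as $\vec\eta$ stays in $Y_{2\tilde\mu}\cap[I_\alpha>c_1+c_2-\tilde\delta_0]$, we have $\|I_\alpha'\|_{\H^*}\ge\min\{\tilde\rho_\alpha,\tilde\rho\}$. On $Y_{\tilde\mu}$ this comes from the contradiction hypothesis, since the level stays $\le c_1+c_2$. On $Y_{2\tilde\mu}\setminus Y_{\tilde\mu}$ it comes from Proposition \ref{prop:4.2}, since $c_1+c_2\le c_1+c_2+\tilde\delta_1$ and $|\alpha|\le\tilde\alpha_1$. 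Hence $I_\alpha$ decreases at a definite rate, and the trajectory leaves this set in finite time. Let $t(\vec r)$ be the first exit time, with $t(\vec r)=0$ if $I_\alpha(\vec\tau(\vec r))\le c_1+c_2-\tilde\delta_0$.

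The key claim is that the trajectory exits through the level set $[I_\alpha=c_1+c_2-\tilde\delta_0]$ rather than through $\partial Y_{2\tilde\mu}$. Suppose instead that it reaches $\partial Y_{2\tilde\mu}$. Then there are times $t_1<t_2$ with $\vec\eta(t_1)\in\partial Y_{\tilde\mu}$, $\vec\eta(t_2)\in\partial Y_{2\tilde\mu}$, and the trajectory in the annulus on $[t_1,t_2]$. By unit speed, $t_2-t_1\ge\tilde\mu$. Hence
\begin{equation*}
I_\alpha(\vec\eta(t_2))\le c_1+c_2-\tilde\rho\tilde\mu\le c_1+c_2-\tilde\delta_0,
\end{equation*}
using $\tilde\delta_0\le\tilde\rho\tilde\mu$. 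So the level set was reached first, a contradiction.

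Now define $\vec\gamma(\vec r)=\vec\eta(t(\vec r);\vec\tau(\vec r))$. Continuity of $t(\cdot)$ follows as before: the crossing of the level $c_1+c_2-\tilde\delta_0$ is transversal, since $\frac{d}{dt}I_\alpha<0$ there, and on the region where $t(\vec r)=0$ the two definitions agree. By Lemma \ref{lem:4.4}, $I_\alpha(\vec\tau)\le c_1+c_2-\tilde\delta_2\le c_1+c_2-\tilde\delta_0$ on $\partial L$. Hence $\vec\gamma=\vec\tau$ on $\partial L$, and by the claim $\vec\gamma(L)\subset Y_{2\tilde\mu}$. Therefore $\vec\gamma\in\widetilde\Gamma$, and
\begin{equation*}
b_\alpha\le\max_{L}I_\alpha\circ\vec\gamma\le c_1+c_2-\tilde\delta_0<b_\alpha,
\end{equation*}
which is the desired contradiction.

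The main obstacle is the claim in the third paragraph, since it is the only place where the constants interact. One must check that $\tilde\delta_0\le\tilde\rho\tilde\mu$ suffices for this one-sided estimate; the factor $1/2$ used in Section 3 is not needed here, because the starting level is at most $c_1+c_2$ rather than $d_\alpha$. One must also check that the level bound $\le c_1+c_2\le c_1+c_2+\tilde\delta_1$ keeps Proposition \ref{prop:4.2} applicable along the whole flow.
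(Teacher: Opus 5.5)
Your proof is correct and is essentially the paper's own argument: the same normalized pseudo-gradient deformation of $\vec\tau$, with Proposition~\ref{prop:4.2} controlling the annulus $Y_{2\tilde\mu}\setminus Y_{\tilde\mu}$, Lemma~\ref{lem:4.4} fixing $\partial L$, and the contradiction $b_\alpha\le c_1+c_2-\tilde\delta_0<b_\alpha$; your remark that the starting level $\le c_1+c_2$ removes the Section~3 factor $\frac12$ is exactly why the paper takes $\tilde\delta_0=\min\{\tilde\delta_1,\tilde\delta_2,\tilde\rho\tilde\mu\}$. One caveat, which the paper shares through \eqref{eq:3.8}: with the stated bounds $\|W\|_{\H}\le 2\|I_\alpha'\|_{\H^*}$ and $\langle I_\alpha',W\rangle\ge\|I_\alpha'\|_{\H^*}^2$, the unit-speed flow only gives $\frac{d}{dt}I_\alpha(\vec\eta)\le-\frac12\|I_\alpha'(\vec\eta)\|_{\H^*}$, so crossing the annulus lowers $I_\alpha$ by at least $\tilde\rho\tilde\mu/2$, not $\tilde\rho\tilde\mu$; you therefore need $\tilde\delta_0\le\tilde\rho\tilde\mu/2$, which is harmless because Proposition~\ref{prop:4.2} also holds with $\tilde\rho/2$ and $\tilde\alpha_0$ is then chosen from Lemma~\ref{lem:4.3} as before.
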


\begin{proof}
Suppose that Proposition \ref{prop:4.5} is false. 
Then there exists $\alpha$ with $|\alpha| \leq \tilde{\alpha}_0$ such that
\begin{equation*}
	\inf_{\vec{u}\in Y_{\tilde{\mu}}\cap[I_\alpha\leq c_1+c_2]}\|I_\alpha'(\vec{u})\|_{\H^*}:=\rho_\alpha>0.
\end{equation*}
As in Proposition \ref{prop:3.6}, we choose a pseudo-gradient vector field $W$ for $I_\alpha$.
For $\vec{r}\in L$, we consider the following differential equation in $\H$ with initial value 
$\vec{\tau}(\vec{r})$:
\begin{equation*}
	\left\{ \begin{aligned}
	& \frac{d\vec{\eta}}{dt}(t;\vec{\tau}(\vec{r})) 
	= -\frac{W(\vec{\eta}(t;\vec{\tau}(\vec{r})))}{\|W(\vec{\eta}(t;\vec{\tau}(\vec{r})))\|_{\H}}, \\
	& \vec{\eta}(0;\vec{\tau}(\vec{r})) = \vec{\tau}(\vec{r}).
	\end{aligned}\right.
\end{equation*}
By the same arguments as in Proposition \ref{prop:3.6}, 
if the initial value satisfies $\vec{\tau}(\vec{r})\in [I_\alpha>c_1+c_2-\tilde{\delta}_0]$,    
then $\vec{\eta}$ reaches the level set $[I_\alpha = c_1 + c_2 - \tilde{\delta}_0]$ before reaching $\partial Y_{2\tilde{\mu}}$.
Let $t(\vec{r})$ denote this first hitting time.
If $\vec{\tau}(\vec{r}) \notin [I_\alpha > c_1 + c_2 - \tilde{\delta}_0]$, we set $\tilde{t}(\vec{r}) = 0$.  
Then, we obtain
\begin{equation*}
	\max_{\vec{r}\in L}I_\alpha(\vec{\eta}(\tilde{t}(\vec{r});\vec{\tau}(\vec{r})))\leq c_1+c_2-\tilde{\delta}_0.
\end{equation*}
Set $\vec{\gamma}(\vec{r})=\vec{\eta}(\tilde{t}(\vec{r});\vec{\tau}(\vec{r}))$.
By arguments analogous to those used in Proposition \ref{prop:3.6}, we can verify that $\vec{\gamma}\in \widetilde{\Gamma}$.
Therefore, 
\begin{equation*}
	b_{\alpha}\leq \max_{\vec{r}\in L}I_\alpha(\vec{\gamma}(\vec{r}))\leq c_1+c_2-\tilde{\delta}_0 < b_{\alpha},
\end{equation*}
which is a contradiction.
Hence Proposition \ref{prop:4.5} is true.
\end{proof}

\begin{lemma}\label{lem:4.6}
Let $\vec{u}=(u_1,u_2,u_3)$ be a solution of \eqref{eq:1.1}.
If $u_1 \not= 0$ and $u_2 \not= 0$, then $u_3\not=0$.
\end{lemma}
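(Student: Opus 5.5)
Suppose $u_3=0$; we aim for a contradiction with $u_1\neq0$ and $u_2\neq0$. Plugging $u_3=0$ into the third equation of \eqref{eq:1.1} and using $f_3(0)=0$ (which follows from (f2) and continuity), we obtain
\begin{equation*}
	0=-\Delta u_3 - f_3(u_3) = \alpha u_1u_2 \quad \text{ in } \R^N .
\end{equation*}
If $\alpha=0$ the statement is void (it is only used for $\alpha\neq0$, and for $\alpha=0$ the triple $(\omega_1,\omega_2,0)$ is a solution), so we assume $\alpha\neq 0$. Then $u_1u_2\equiv0$ a.e. in $\R^N$. The remaining task is to show this forces $u_1\equiv0$ or $u_2\equiv0$.

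With $u_3=0$, the first two equations decouple to $-\Delta u_i=f_i(u_i)$ for $i=1,2$. Since $u_i\in H^1$ and $f_i$ has critical-subcritical growth by (f1) and linear behaviour near $0$ by (f2), standard elliptic regularity (Brezis--Kato bootstrap, then $W^{2,p}_{\rm loc}$ estimates) gives $u_i\in C^1(\R^N)$. Radial symmetry lets us write $u_i(x)=v_i(|x|)$, where $v_i$ solves the ODE $v''+\frac{N-1}{r}v'+f_i(v)=0$ on $(0,\infty)$.

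The key step is unique continuation for this ODE. If $v_i$ vanished on an open interval $(a,b)\subset(0,\infty)$, then at a point $r_0\in(a,b)$ we would have $v_i(r_0)=v_i'(r_0)=0$. We then want to conclude that $v_i\equiv0$. This is the main obstacle, because $f_i$ is only continuous, so Picard uniqueness does not apply directly. Condition (f2) is what rescues the argument: near $0$ we have $|f_i(\xi)|\le C|\xi|$. The ODE can be written as $(r^{N-1}v')'=-r^{N-1}f_i(v)$. For $r$ in a compact subinterval of $(0,\infty)$ near $r_0$, this gives $|v(r)|+|v'(r)|\le C\int_{r_0}^r (|v|+|v'|)\,ds$, and Gronwall's inequality yields $v\equiv0$ locally. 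The zero set of $(v,v')$ is therefore open and closed in $(0,\infty)$, so $v_i\equiv0$ on $(0,\infty)$ and hence $u_i\equiv0$.

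To conclude: since $u_1\neq0$, the previous step shows that $\{u_1\neq0\}$ is dense and open, because $u_1$ cannot vanish on any annulus $\{a<|x|<b\}$. Then $u_1u_2\equiv0$ and the continuity of $u_2$ force $u_2=0$ on a dense set, and hence everywhere. This contradicts $u_2\neq0$, so $u_3\neq0$.
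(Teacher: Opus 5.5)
Your proof is correct, but it handles the key unique-continuation step by a different route than the paper. Both arguments start the same way: with $u_3=0$, the third equation gives $\alpha u_1u_2=0$ (the paper tests $I_\alpha'$ against $(0,0,\varphi)$), so $u_1u_2=0$ once $\alpha\neq0$.

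\textbf{The paper's route.} It writes each $u_i$ as a solution of $-\Delta u_i+V_iu_i=0$ with $V_i=-f_i(u_i)/u_i\in L^{N/2}_{\rm loc}(\R^N)$. It then invokes the Jerison--Kenig strong unique continuation theorem to conclude that a nontrivial $u_i$ cannot vanish on a nonempty open set.

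\textbf{Your route.} You use the fact that $\H$ consists of radial functions and reduce to the ODE $(r^{N-1}v')'=-r^{N-1}f_i(v)$ on $(0,\infty)$. You then get unique continuation from Gronwall, using only the bound $|f_i(\xi)|\le C|\xi|$ near $0$ supplied by (f2). That bound is a Lipschitz-type estimate at the zero solution, which is all that is needed when comparing with $v\equiv0$.

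\textbf{Comparison.} Your argument is elementary and self-contained. It also makes explicit the continuity of $u_1,u_2$, which the paper uses tacitly when it passes from $u_1u_2=0$ a.e.\ to ``one of them vanishes on a nonempty open set''; for merely measurable functions that step fails. The paper's argument is shorter and does not rely on radial symmetry, so it would carry over to nonradial solutions, at the price of citing a deep theorem.

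Two small points:
\begin{itemize}
\item Your final density step can be shortened. $u_2$ vanishes on the nonempty open set $\{u_1\neq0\}$, whose set of radii contains an interval, so your ODE step applied to $u_2$ already gives $u_2\equiv0$.
\item For $\alpha=0$ the lemma is not ``void'' but false, as your own example $(\omega_1,\omega_2,0)$ shows. The correct statement requires $\alpha\neq0$, and the paper's proof makes the same assumption implicitly.
\end{itemize}
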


\begin{proof}
We argue by contradiction.  
Suppose that $u_3 = 0$. 
Then, for any $\varphi \in H^1(\R^N)$,
\begin{equation*}
	I_\alpha'(u_1,u_2,0)(0,0,\varphi) = - \alpha \int_{\R^N}u_1u_2\varphi\, dx=0.
\end{equation*}
This implies $u_1 u_2 = 0$ in $\R^N$. 
For $i=1,2$, $u_i$ is a nontrivial solution of 
\begin{equation*}
	-\Delta u_i + V_i(x) u_i = 0, \qquad V_i(x)=-\frac{f_i(u_i(x))}{u_i(x)}.
\end{equation*}
Then, we have $V_i\in L^{\frac N2}_{\rm loc}(\R^N)$.  
By the strong unique continuation property for Schr\"odinger operators with potentials in $L^{\frac N2}_{\rm loc}$ (see \cite{JK}),
any solution that vanishes in a nonempty open set must vanish identically.
Since $u_1u_2=0$ in $\R^N$, at least one of $u_1$ or $u_2$ must vanish on a nonempty open set, and hence that function must be identically zero.
This contradicts the assumption that $u_1\neq0$ and $u_2\neq0$.
Therefore $u_3\neq0$, and the proof is completed.
\end{proof}

\begin{proof}[Proof of Theorem \ref{thm:1.2}.]
Fix $\alpha$ with $|\alpha|\leq \tilde{\alpha}_0$.  
By Proposition \ref{prop:4.5}, there exists a sequence 
$\{\vec{u}_n\}$ such that 
\begin{equation*}
	\vec{u}_n \in Y_{\tilde{\mu}} \cap [I_\alpha \leq c_1+c_2] \quad \text{ and } \quad 
	\|I_{\alpha}'(\vec{u}_n)\|_{\H^*} \to 0.
\end{equation*}  
By Proposition \ref{prop:2.2}, $\{\vec{u}_n\}$ has a subsequence converging strongly in $\H$ to some $\vec{u}_\alpha \in Y_{\tilde{\mu}}$.  
In particular, by Lemma \ref{lem:4.6}, $\vec{u}_\alpha$ is a vector solution of \eqref{eq:1.1}.  
Moreover, for the family $\{\vec{u}_\alpha\}$, Lemma \ref{lem:4.1} implies that 
$\displaystyle \lim_{\alpha \to 0}{\rm dist}(\vec{u}_\alpha,Y)=0$.
\end{proof}

\subsection{Proof of Theorem \ref{thm:1.3}}

For $\lambda>0$, we define
\begin{equation*}
	\|u\|_{\lambda}^2 = \|\nabla u\|_{L^2}^2 + \lambda \|u\|_{L^2}^2.
\end{equation*}
We first establish the following lemma.

\begin{lemma}\label{lem:4.7}
Let $M>0$. 
There exist constants $\alpha_M>0$ and $\rho_0>0$ such that, for any $\alpha$ with $|\alpha|\leq \alpha_M$, 
every solution $\vec{u}=(u_1,u_2,u_3)$ of \eqref{eq:1.1} with $\|\vec{u}\|_{\H}\leq M$ satisfies
\begin{align}
	& (u_1,u_2) \not = (0,0) \ \ \Longrightarrow \ \ \|u_1\|_{\lambda_1}^2 + \|u_2\|_{\lambda_2}^2 \geq \rho_0, \label{eq:4.6}\\
	& (u_1,u_3) \not = (0,0) \ \ \Longrightarrow \ \ \|u_1\|_{\lambda_1}^2 + \|u_3\|_{\lambda_3}^2 \geq \rho_0, \label{eq:4.7}\\
	& (u_2,u_3) \not = (0,0) \ \ \Longrightarrow \ \ \|u_2\|_{\lambda_2}^2 + \|u_3\|_{\lambda_3}^2 \geq \rho_0. \label{eq:4.8}
\end{align}
\end{lemma}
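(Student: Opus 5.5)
By symmetry it suffices to prove \eqref{eq:4.6}; \eqref{eq:4.7} and \eqref{eq:4.8} follow by permuting indices. The idea is to test the equations for $u_1$ and $u_2$ against $u_1$ and $u_2$. Near zero the positive part $h_i^+$ vanishes and the linear part is coercive, so the quadratic norm dominates. The cubic coupling term is small because $\alpha$ is small and $\|u_3\|$ is bounded by $M$.

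\textbf{Step 1 (testing the equations).} Let $\vec u$ be a solution with $\|\vec u\|_{\H}\le M$. Since $I_\alpha'(\vec u)(u_1,0,0)=0$ and $I_\alpha'(\vec u)(0,u_2,0)=0$, adding the two identities gives
\begin{equation*}
\|u_1\|_{\lambda_1}^2+\|u_2\|_{\lambda_2}^2=\sum_{i=1}^2\int_{\R^N}\bigl(h_i^+(u_i)u_i-h_i^-(u_i)u_i\bigr)\,dx+2\alpha\int_{\R^N}u_1u_2u_3\,dx.
\end{equation*}
We drop the $h_i^-$ terms, which are nonnegative.

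\textbf{Step 2 (bounding the right-hand side).} By \eqref{eq:2.2} and (f1), there is $C>0$ with $|h_i^+(\xi)\xi|\le C|\xi|^{2^*}$, exactly as in the proof of Lemma~\ref{lem:2.4}. Sobolev's inequality then gives $\int h_i^+(u_i)u_i\,dx\le C'\|u_i\|_{\lambda_i}^{2^*}$.

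For the coupling term, $N\le 5$ gives $3\le 2^*$, so $L^3$ lies between $L^2$ and $L^{2^*}$. Hölder's inequality and Sobolev's embedding give $\bigl|\int u_1u_2u_3\,dx\bigr|\le \|u_1\|_{L^3}\|u_2\|_{L^3}\|u_3\|_{L^3}\le C''M\,\|u_1\|_{H^1}\|u_2\|_{H^1}$. The right-hand side is bounded by $C'''M(\|u_1\|_{\lambda_1}^2+\|u_2\|_{\lambda_2}^2)$, using equivalence of $\|\cdot\|_{\lambda_i}$ with $\|\cdot\|_{H^1}$ and $ab\le\frac{1}{2}(a^2+b^2)$.

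\textbf{Step 3 (absorbing the coupling term).} Set $t=\|u_1\|_{\lambda_1}^2+\|u_2\|_{\lambda_2}^2$. Steps 1 and 2 give
\begin{equation*}
t\le C_1t^{2^*/2}+2|\alpha|C'''M\,t .
\end{equation*}
Choose $\alpha_M$ with $2\alpha_M C'''M\le \frac12$. Then for $|\alpha|\le\alpha_M$ we get $\frac12 t\le C_1 t^{2^*/2}$. If $(u_1,u_2)\ne(0,0)$ then $t>0$, and since $2^*/2>1$ this yields $t\ge (2C_1)^{-2/(2^*-2)}=:\rho_0$. This $\rho_0$ is independent of $\alpha$ and of $M$.

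\textbf{Main obstacle.} The key point is that the coupling term must be bounded by a \emph{quadratic} quantity in $(u_1,u_2)$, with coefficient controlled by $|\alpha|M$ rather than by a constant. Otherwise it could not be absorbed into the left-hand side, and the pair $(u_1,u_2)$ could be small while still nonzero. This is where the third component's bound $M$ and the restriction $N\le5$ (needed for the $L^3$ Sobolev embedding) are used. The remaining ingredient is the superquadratic estimate of $h_i^+$ near zero, which is already available from \eqref{eq:2.2}.
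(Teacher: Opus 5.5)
Your proof is correct and follows the paper's argument essentially step for step. You test the equations against the relevant pair of components and use $g_i(\xi)\xi\le h_i^+(\xi)\xi\le C|\xi|^{2^*}$. You bound the cubic term by H\"older in $L^3$, with the remaining factor controlled by $M$, and absorb it for $|\alpha|\le\alpha_M$ to obtain $\rho_0=(2C)^{-2/(2^*-2)}$ independent of $M$. The only cosmetic difference is that you write out \eqref{eq:4.6}, whereas the paper writes out \eqref{eq:4.8}.
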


\begin{proof}
We prove \eqref{eq:4.8}. 
\eqref{eq:4.6} and \eqref{eq:4.7} also can be shown in a similar way.
Since $h_i^+(\xi)$ satisfies \eqref{eq:2.2} and (f1), there exists a constant $C>0$ such that
\begin{equation*}
	|h_i^+(\xi)\xi|\leq C|\xi|^{2^*} \quad \text{ for all } \xi\in \R.
\end{equation*}
Hence 
\begin{equation*}
	g_i(\xi)\xi = h_i^+(\xi)\xi- h_i^-(\xi)\xi \leq h_i^+(\xi)\xi \leq C|\xi|^{2^*} \quad \text{ for all } \xi\in \R.
\end{equation*}
From $I_{\alpha}'(\vec{u})(0,u_2,u_3)=0$, we have
\begin{align*}
	\|u_2\|_{\lambda_2}^2 + \|u_3\|_{\lambda_3}^2
	& = \int_{\R^N}g_2(u_2)u_2 + g_3(u_3)u_3 \,dx + 2\alpha \int_{\R^N}u_1u_2u_3\, dx \\
	& \leq C\left(\|u_2\|_{L^{2^*}}^{2^*} + \|u_3\|_{L^{2^*}}^{2^*}\right) + 2|\alpha| \|u_1\|_{L^3}\|u_2\|_{L^3}\|u_3\|_{L^3} \\
	& \leq C'(\|u_2\|_{\lambda_2}^2 + \|u_3\|_{\lambda_3}^2)^{\frac{2^*}{2}}
	+ |\alpha| C'M\left(\|u_2\|_{\lambda_2}^2+\|u_3\|_{\lambda_3}^2\right),
\end{align*}
where $C' > 0$ is a constant which does not depend on $M>0$.  
Therefore, we obtain
\begin{align*}
	1 - |\alpha| C'M \leq C'(\|u_2\|_{\lambda_2}^2 + \|u_3\|_{\lambda_3}^2)^{\frac{2^*}{2}-1}.
\end{align*}
We choose $\alpha_M>0$ and $\rho_0>0$ such that $1 - \alpha_M C'M = \frac{1}{2}$, $\rho_0=\left(2C'\right)^{-\frac{2}{2^*-2}}$.
Then \eqref{eq:4.8} follows.
\end{proof}

\begin{proof}[Proof of Theorem \ref{thm:1.3}]
Let $M=\sup_{\omega_1\in S_1}\|\omega_1\|_{H^1}+1$.
By Lemma \ref{lem:4.7}, for any $\alpha$ with $|\alpha| \leq \alpha_M$, \eqref{eq:1.1} has no vector solutions $\vec{u}=(u_1,u_2,u_3)$ 
satisfying $\|\vec{u}\|_{\H}\leq M$ and $\|u_2\|_{\lambda_2}+\|u_3\|_{\lambda_3}<\rho_0$.
Consequently, there exists no family of solutions to \eqref{eq:1.1} satisfying \eqref{eq:1.4}.
\end{proof}

\begin{remark}\label{rmk:4.8}
The nonexistence of the family of solutions $\{\vec{u}_{\beta}\}$ to \eqref{eq:1.5} satisfying \eqref{eq:1.6} 
also follows from the next claim.

\medskip

\noindent
\textbf{Claim.}  
Let $M>0$. 
There exist constants $\beta_M>0$ and $\rho>0$ such that, for any $\vec{\beta}=(\beta_{12},\beta_{13},\beta_{23})$ with 
$|\vec{\beta}|\leq \beta_M$, 
every solution $\vec{u}=(u_1,u_2,u_3)$ of \eqref{eq:1.5} with $\|\vec{u}\|_{\H}\leq M$ satisfies
\begin{equation*}
	u_i \not=0 \ \ \Longrightarrow \ \ \|u_i\|_{\lambda_i}^2 \geq \rho.
\end{equation*}

\medskip

\noindent
This claim can be proved in the same way as Lemma \ref{lem:4.7}.
\end{remark}

\section*{Acknowledgements}

This work was supported by JSPS KAKENHI Grant Numbers JP20K03691 and JP24KJ2070.

\bigskip

\noindent
{\bf Author Contributions} All authors contributed equally to the preparation and review of the manuscript.

\bigskip

\noindent
{\bf Data Availability Statement} This manuscript has no associated data.

\section*{Declarations}

\noindent
{\bf Conflict of Interest} The authors declare that they have no conflict of interest.


\begin{bibdiv}
\begin{biblist}



\bib{AM}{book}{
   author={Ambrosetti, Antonio},
   author={Malchiodi, Andrea},
   title={Nonlinear analysis and semilinear elliptic problems},
   series={Cambridge Studies in Advanced Mathematics},
   volume={104},
   publisher={Cambridge University Press, Cambridge},
   date={2007},
}


\bib{A}{article}{
   author={Ardila, Alex H.},
   title={Orbital stability of standing waves for a system of nonlinear
   Schr\"odinger equations with three wave interaction},
   journal={Nonlinear Anal.},
   volume={167},
   date={2018},
   pages={1--20},
}


\bib{BJ1}{article}{
   author={Byeon, Jaeyoung},
   author={Jeanjean, Louis},
   title={Standing waves for nonlinear Schr\"odinger equations with a
   general nonlinearity},
   journal={Arch. Ration. Mech. Anal.},
   volume={185},
   date={2007},
   number={2},
   pages={185--200},
}

\bib{BL1}{article}{
   author={Berestycki, H.},
   author={Lions, P.-L.},
   title={Nonlinear scalar field equations. I. Existence of a ground state},
   journal={Arch. Rational Mech. Anal.},
   volume={82},
   date={1983},
   number={4},
   pages={313--345},
}


\bib{CC1}{article}{
   author={Colin, M.},
   author={Colin, T.},
   title={A numerical model for the Raman amplification for laser-plasma
   interaction},
   journal={J. Comput. Appl. Math.},
   volume={193},
   date={2006},
   number={2},
   pages={535--562},
}

\bib{CC2}{article}{
   author={Colin, M.},
   author={Colin, T.},
   title={On a quasilinear Zakharov system describing laser-plasma
   interactions},
   journal={Differential Integral Equations},
   volume={17},
   date={2004},
   number={3-4},
   pages={297--330},
}

\bib{CCO1}{article}{
   author={Colin, M.},
   author={Colin, Th.},
   author={Ohta, M.},
   title={Stability of solitary waves for a system of nonlinear
   Schr\"odinger equations with three wave interaction},
   language={English, with English and French summaries},
   journal={Ann. Inst. H. Poincar\'e{} C Anal. Non Lin\'eaire},
   volume={26},
   date={2009},
   number={6},
   pages={2211--2226},
}

\bib{CCO2}{article}{
   author={Colin, Mathieu},
   author={Colin, Thierry},
   author={Ohta, Masahito},
   title={Instability of standing waves for a system of nonlinear
   Schr\"odinger equations with three-wave interaction},
   journal={Funkcial. Ekvac.},
   volume={52},
   date={2009},
   number={3},
   pages={371--380},
}

\bib{CO1}{article}{
   author={Colin, Mathieu},
   author={Ohta, Masahito},
   title={Bifurcation from semitrivial standing waves and ground states for
   a system of nonlinear Schr\"odinger equations},
   journal={SIAM J. Math. Anal.},
   volume={44},
   date={2012},
   number={1},
   pages={206--223},
}



\bib{CZ1}{article}{
   author={Chen, Z.},
   author={Zou, W.},
   title={On coupled systems of Schr\"odinger equations},
   journal={Adv. Differential Equations},
   volume={16},
   date={2011},
   number={7-8},
   pages={775--800},
}
\bib{CZ2}{article}{
   author={Chen, Zhijie},
   author={Zou, Wenming},
   title={On linearly coupled Schr\"odinger systems},
   journal={Proc. Amer. Math. Soc.},
   volume={142},
   date={2014},
   number={1},
   pages={323--333},
}





\bib{HIT}{article}{
   author={Hirata, Jun},
   author={Ikoma, Norihisa},
   author={Tanaka, Kazunaga},
   title={Nonlinear scalar field equations in $\mathbb R^N$: mountain pass and
   symmetric mountain pass approaches},
   journal={Topol. Methods Nonlinear Anal.},
   volume={35},
   date={2010},
   number={2},
   pages={253--276},
}


\bib{JK}{article}{
   author={Jerison, David},
   author={Kenig, Carlos E.},
   title={Unique continuation and absence of positive eigenvalues for
   Schr\"odinger operators},
   journal={Ann. of Math. (2)},
   volume={121},
   date={1985},
   number={3},
   pages={463--494},
}

\bib{JT}{article}{
   author={Jeanjean, Louis},
   author={Tanaka, Kazunaga},
   title={A remark on least energy solutions in ${\bf R}^N$},
   journal={Proc. Amer. Math. Soc.},
   volume={131},
   date={2003},
   number={8},
   pages={2399--2408},
}

\bib{KiO1}{article}{
   author={Kinoshita, Tomoharu},
   author={Osada, Yuki},
   title={Multiplicity of solutions for a nonlinear Schr\"odinger system
   with three wave interaction},
   journal={Partial Differ. Equ. Appl.},
   volume={6},
   date={2025},
   number={3},
   pages={Paper No. 22, 14},
}


\bib{KO1}{article}{
   author={Kurata, Kazuhiro},
   author={Osada, Yuki},
   title={Asymptotic expansion of the ground state energy for nonlinear Schr\"odinger system with three wave interaction},
   journal={Commun. Pure Appl. Anal.},
   volume={20},
   date={2021},
   number={12},
   pages={4239--4251},
}

\bib{KO2}{article}{
   author={Kurata, Kazuhiro},
   author={Osada, Yuki},
   title={Variational problems associated with a system of nonlinear Schr\"odinger equations with three wave interaction},
   journal={Discrete Contin. Dyn. Syst. Ser. B},
   volume={27},
   date={2022},
   number={3},
   pages={1511--1547},
}


\bib{O1}{article}{
   author={Osada, Yuki},
   title={Existence of a minimizer for a nonlinear Schr\"odinger system with
   three wave interaction under non-symmetric potentials},
   journal={Partial Differ. Equ. Appl.},
   volume={3},
   date={2022},
   number={2},
   pages={Paper No. 28, 18},
}

\bib{OS}{article}{
   author={Osada, Yuki},
   author={Sato, Yohei},
   title={A construction of peak solutions by a local mountain pass approach
   for a nonlinear Schr\"odinger system with three wave interaction},
   journal={Partial Differ. Equ. Appl.},
   volume={6},
   date={2025},
   number={1},
   pages={Paper No. 8, 26},
}


\bib{P1}{article}{
   author={Pomponio, A.},
   title={Ground states for a system of nonlinear Schr\"odinger equations
   with three wave interaction},
   journal={J. Math. Phys.},
   volume={51},
   date={2010},
   number={9},
   pages={093513, 20},
}




\end{biblist}
\end{bibdiv}
\end{document}